\let\pa\partial
\let\na\nabla
\let\eps\varepsilon
\newcommand{\N}{{\mathbb N}}
\newcommand{\R}{{\mathbb R}}
\newcommand{\diver}{\operatorname{div}}
\newtheorem{theorem}{Theorem}
\newtheorem{lemma}[theorem]{Lemma}
\newtheorem{remark}[theorem]{Remark}
\newtheorem{definition}{Definition}
\begin{document}

\title[Chemotaxis compressible Navier--Stokes equations]{Global existence and weak-strong uniqueness for chemotaxis compressible 
Navier--Stokes equations modeling vascular network formation}

\author[X. Huo]{Xiaokai Huo}
\address{Department of Mathematics, Iowa State University, 
411 Morrill Road, Ames IA 50011-2104, USA}
\email{xhuo@iastate.edu}

\author[A. J\"ungel]{Ansgar J\"ungel}
\address{Institute of Analysis and Scientific Computing, Technische Universit\"at Wien,
Wiedner Hauptstra\ss e 8--10, 1040 Wien, Austria}
\email{juengel@tuwien.ac.at}

\date{\today}

\thanks{The last author acknowledges partial support from
the Austrian Science Fund (FWF), grants P33010 and F65.
This work has received funding from the European
Research Council (ERC) under the European Union's Horizon 2020 research and
innovation programme, ERC Advanced Grant no.~101018153.}

\begin{abstract}
A model of vascular network formation is analyzed in a bounded domain, consisting of
the compressible Navier--Stokes equations for the density of the endothelial
cells and their velocity, coupled to a reaction-diffusion equation for the 
concentration of the chemoattractant, which triggers the migration of the endothelial
cells and the blood vessel formation. The coupling of the equations is realized by
the chemotaxis force in the momentum balance equation. The global existence of finite energy
weak solutions is shown for adiabatic pressure coefficients $\gamma>8/5$.
The solutions satisfy a relative energy inequality, which allows for the proof of
the weak--strong uniqueness property.
\end{abstract}

% \paragraph{Keywords:}
\keywords{Compressible Navier--Stokes equations, chemotaxis force, 
global existence of solutions, weak--strong uniqueness, relative energy.}

% \paragraph{AMS classification:}
\subjclass[2000]{35Q30, 35K57, 35K65, 76N05.}

\maketitle

%%%%%%%%%%%%%%%%%%%%%%%%%%%%%%%%%%%%%%%%%%%%%%%%%%%%%%%%%%%%%%%%%%%%%%%%%%%%%%%

\section{Introduction}

The formation of blood vessels is regulated by chemical signals triggering the movement of
endothelial cells. The cells may self-assemble into a vascular network, which is known as 
vasculogenesis. In this paper, we analyze a mathematical model for the formation of
vascular networks, based on mass and momentum balance equations including a chemotaxis force
and coupled with a reaction-diffusion equation for the signal concentration.
The existence of global weak solutions to the resulting chemotaxis compressible
Navier--Stokes equations was proved in \cite{AiAl16} for pressures with adiabatic
exponent $\gamma>3$. We extend the existence result to the range $\gamma > 8/5$ and
prove a weak--strong uniqueness property. The proofs are based on
a new relative energy inequality.

The dynamics of the density $\rho(x,t)$ of the endothelial cells, their velocity $v(x,t)$,
and the concentration $c(x,t)$ of the chemoattractant  
(e.g.\ the vascular endothelial growth factor VEGF-A \cite{SAGGPB03})
is given by the equations
\begin{align}
  & \pa_t\rho + \diver(\rho v) = 0, \label{1.mass} \\
	& \pa_t(\rho v) + \diver(\rho v\otimes v) + \na p(\rho) 
	= \mu\Delta v + (\lambda+\mu)\na\diver v + \rho\na c - \frac{\rho v}{\zeta}, 
	\label{1.mom} \\
	& \pa_t c = \Delta c - c + \rho\quad\mbox{in }\Omega,\ t>0, \label{1.chem}
\end{align}
where $\Omega\subset\R^3$ is a bounded domain, $p(\rho)=\rho^\gamma$ with the adiabatic
exponent $\gamma>1$ is the pressure, 
the Lam\'e viscosity constants $\mu$, $\lambda$ satisfy $\mu>0$ and
$3\lambda + 2\mu>0$, and $\zeta>0$ is a relaxation constant. 
We impose the initial and boundary conditions
\begin{align}
  & \rho(\cdot,0) = \rho^0, \quad v(\cdot,0) = v^0, \quad c(\cdot,0) = c^0
	\quad\mbox{in }\Omega, \label{1.ic} \\
	& v = 0, \quad \na c\cdot\nu = 0 \quad\mbox{on }\pa\Omega,\ t>0. \label{1.bc}
\end{align}
The boundary condition for the velocity $v$ is the no-slip condition, and the
no-flux boundary condition for $c$ means that there is no inflow or outflow
of the concentration.  
The momentum balance equation \eqref{1.mom} includes viscous terms as in \cite{AiAl16}
(suggested in \cite[p.~1862]{AGS04}) 
as well as the chemotaxis force $\rho f_{\rm chem}=-\rho\na c$
and the drag force $\rho f_{\rm drag}=-\rho v/\zeta$. The reaction-diffusion equation
\eqref{1.chem} for the signal concentration models diffusion in the surrounding medium,
degradation of the signal in finite time, and the release of the signal produced by the
cells. We have set the physical constants in 
\eqref{1.mass}--\eqref{1.chem} equal to one, except $\zeta$ to distinguish terms originating
from the drag force.

The existence of global finite energy weak solutions to \eqref{1.mass}--\eqref{1.bc} has been
proved in \cite{AiAl16} for $\gamma>3$. This restriction comes from the estimation of the chemotaxis force; see Remark \ref{rem.AiAl16} on page \pageref{rem.AiAl16}. We extend the existence result to $\gamma>8/5$ by rewriting the force term $\rho\na c$ via \eqref{1.chem} as $(\pa_t c-\Delta c+c)\na c$ and exploiting the properties of the Bogovskii operator. Replacing the parabolic equation \eqref{1.chem} for $c$ by the elliptic one, we can even allow for $\gamma > 3/2$, which is the
condition needed in the existence theory of the compressible Navier--Stokes equations \cite{FNP01}; see Remark \ref{rem.improve}. This may indicate that our condition $\gamma>8/5$ for system \eqref{1.mass}--\eqref{1.chem} is not optimal. We discuss this issue further in Remark \ref{rem.gamma}.
%System \eqref{1.mass}--\eqref{1.bc} converges to the degenerate Keller--Segel equations in the diffusive limit \cite{BiBr09,DiDo10}, and these equations can be solved for $\gamma>4/3$ \cite{TaWi12}. This indicates that our condition on $\gamma$ may be not optimal. 

The idea of the existence proof in \cite{AiAl16} is to derive a priori estimate via the 
energy-type functional
$$
  \widetilde{H}(\rho,v,c) = \int_\Omega\bigg(\psi(\rho) + \frac12\rho|v|^2 + \frac12 c^2\bigg)dx,
$$
where $\psi(\rho)=\rho\int_0^\rho s^{-2}p(s)ds=\rho^\gamma/(\gamma-1)$ can be interpreted
as the internal energy. Unfortunately, this functional is not
bounded as $t\to\infty$. Our idea is to use the physical (free) energy,
\begin{equation}\label{1.E}
  E(\rho,v,c) = \int_\Omega\bigg(\psi(\rho) + \frac12\rho|v|^2 + \frac12(|\na c|^2+c^2)
	- \rho c\bigg)dx,
\end{equation}
which is the sum of the kinetic energy $\frac12\int_\Omega\rho|v|^2dx$ 
and the energy $E(\rho,0,c)$ of the parabolic--parabolic Keller--Segel model.
We show in Section \ref{sec.ex} (see Lemma \ref{lem.Hn} on page \pageref{lem.Hn}) that 
\begin{align*}
  \frac{dE}{dt}(\rho,v,c) + \int_\Omega\big(\mu|\na v|^2 + (\lambda+\mu)|\diver v|^2\big)dx
	+ \int_\Omega|\pa_t c|^2 dx \le 0,
\end{align*}
providing a bound for $E((\rho,v,c)(t))$ uniformly in time.
Clearly, to infer a priori estimates, we need an upper bound for $\rho c$. This is done by using the inequality 
\begin{equation}\label{1.sugiyama}
  \int_\Omega\rho cdx \le \frac{1}{2}\|\psi(\rho)\|_{L^1(\Omega)}
	+ \frac14\|\na c\|_{L^2(\Omega)}^2 + C_1(\gamma)\|c\|_{L^1(\Omega)}^{C_2(\gamma)},
\end{equation}
which is due to Sugiyama \cite{Sug07} (see Lemma \ref{lem.aux} on page \pageref{lem.aux}), where $C_1(\gamma)>0$ and $C_2(\gamma)>0$ only depend on $\gamma$, and which \eqref{1.sugiyama} requires the condition $\gamma>8/5$. The first two terms on the right-hand side of \eqref{1.sugiyama} can be absorbed by the energy, while the $L^1(\Omega)$ norm of $c$ 
can be bounded in terms of the initial data $(\rho^0,c^0)$. This provides a bound
for the modified energy-type functional
\begin{equation}\label{1.H}
  H(\rho,v,c) = \frac12\int_\Omega\bigg(\psi(\rho) + \rho|v|^2 + \frac12|\na c|^2 + c^2\bigg)dx,
\end{equation}
namely
\begin{align*}
  H((\rho,v,c)(t)) &+ \int_0^t\int_\Omega\big(\mu|\na v|^2+(\lambda+\mu)|\diver v|^2\big)dxds \\
	&{}+ \int_0^t\int_\Omega|\pa_s c|^2 dxds \le C(\rho^0,v^0,c^0),
\end{align*}
which allows us to prove the global existence of finite energy weak solutions such that
$H(\rho,v,c)$ is finite for all $t>0$. This type of solutions is defined as follows.

\begin{definition}[Finite energy weak solution]\label{def.weak}
The triple $(\rho,v,c)$ is a {\em finite energy weak solution} 
to \eqref{1.mass}--\eqref{1.bc} if 
\begin{itemize}
\item they satisfy the regularity
\begin{align*}
  & \rho\in L^\infty(0,T;L^\gamma(\Omega)), \quad \rho\ge 0\mbox{ in }\Omega,\ t>0, \\
	& v\in L^2(0,T;H_0^1(\Omega;\R^3)), \quad 
	c\in L^\infty(0,T;H^1(\Omega))\cap H^1(0,T;L^2(\Omega));
\end{align*}
\item equation \eqref{1.mass} is satisfied in the sense of renormalized solutions
\cite[Section 10.18]{FeNo09};
\item equations \eqref{1.mom}--\eqref{1.chem} are satisfied in the sense of distributions;
\item the energy inequality
\begin{align*}
  E((&\rho,v,c)(t)) 
	+ \int_0^t\int_\Omega\big(\mu|\na v|^2 + (\lambda+\mu)|\diver v|^2\big)dxds \\
	&{}+ \int_0^t\int_\Omega|\pa_s c_n|^2 dxds
	+ \frac{1}{\zeta}\int_0^t\int_\Omega \rho_n|v_n|^2 dxds
	\le E(\rho^0,v^0,c^0)
\end{align*}
holds for a.e.\ $t\in(0,T)$. 
\end{itemize}
\end{definition}
We introduce for $1<p,q<\infty$ the space $W^{2-2/p,q}_\nu( \Omega)$ as the completion of the space of functions $w\in C^\infty(\overline\Omega)$ satisfying $\na w\cdot\nu=0$ on $\pa\Omega$ in the norm of $W^{2-2/p,q}(\Omega)$. We can now state our first main result.

\begin{theorem}[Global existence]\label{thm.ex}
Let $\pa\Omega\in C^2$, $p(\rho)=\rho^\gamma$ for $\rho\ge 0$ with $\gamma>8/5$.
Assume that the initial datum satisfies $\rho^0\in L^\gamma(\Omega)$ with
$\rho^0\ge 0$, $\rho^0\not\equiv 0$ in $\Omega$, $\rho^0|v^0|^2\in L^1(\Omega)$,
and $c^0\in W^{2-2/\gamma,\gamma}_\nu(\Omega)$, $c^0\ge 0$ in $\Omega$. 
Then there exists a finite energy weak solution
$(\rho,v,c)$ to \eqref{1.mass}--\eqref{1.bc} in the sense of Definition \ref{def.weak}.
\end{theorem}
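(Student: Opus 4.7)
The plan is to follow the standard three-level approximation scheme of Feireisl--Novotn\'y--Petzeltov\'a for the compressible Navier--Stokes equations, coupled at each level with the parabolic equation for $c$, and to close the a priori estimates using the modified energy $H$ from \eqref{1.H} together with Sugiyama's inequality \eqref{1.sugiyama}. Concretely, I introduce approximate solutions $(\rho_n,v_n,c_n)$ depending on a Galerkin dimension $n\in\N$, an artificial viscosity $\eps>0$ in the continuity equation, and an artificial pressure $\delta\rho^\beta$ for $\beta$ large. At the innermost level, a Schauder fixed-point argument for $v_n$ in a finite-dimensional subspace of $H^1_0(\Omega;\R^3)$, combined with unique solvability of the parabolic continuity equation $\pa_t\rho_n+\diver(\rho_n v_n)=\eps\Delta\rho_n$ (with homogeneous Neumann conditions) and of the equation for $c_n$ via maximal parabolic regularity (giving $c_n\in L^p(0,T;W^{2-2/p,q}_\nu(\Omega))$), produces a local solution, which is extended globally using the uniform energy bound. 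The latter follows from Lemma~\ref{lem.Hn}: the indefinite term $-\int_\Omega \rho c\,dx$ in $E$ is dominated by \eqref{1.sugiyama}, where $\|c(t)\|_{L^1}$ is kept bounded because integration of \eqref{1.chem} gives the linear ODE $(d/dt)\int_\Omega c\,dx=\int_\Omega\rho^0\,dx-\int_\Omega c\,dx$. This yields uniform estimates on $\|\rho\|_{L^\infty(L^\gamma)}$, $\|\sqrt{\rho}\,v\|_{L^\infty(L^2)}$, $\|v\|_{L^2(H^1_0)}$, $\|c\|_{L^\infty(H^1)}$, $\|\pa_t c\|_{L^2(L^2)}$, and in turn, by elliptic regularity in \eqref{1.chem}, on $\|c\|_{L^2(H^2)}$.

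Next, to improve pressure integrability before the limits $\eps\to 0$ and $\delta\to 0$, I test the momentum balance with the Bogovskii-type multiplier $\mathcal{B}[\rho^\theta-\overline{\rho^\theta}]$ for a small $\theta>0$, as in \cite{FNP01}. The novel and delicate step is the treatment of the chemotaxis force $\rho\na c$: using \eqref{1.chem}, I rewrite it as $(\pa_t c-\Delta c+c)\na c$. The term $(\Delta c)\na c=\diver(\na c\otimes\na c)-\frac12\na|\na c|^2$ is in divergence form and absorbs into an integration by parts against $\mathcal{B}[\rho^\theta-\overline{\rho^\theta}]$, whose derivative is $L^q$-bounded by $\|\rho^\theta\|_{L^q}$; the term $(\pa_t c)\na c$ is controlled by $\|\pa_t c\|_{L^2(L^2)}\|\na c\|_{L^\infty(L^2)}$ and the bound on the Bogovskii operator. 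A careful tracking of the exponents, using $\na c\in L^2(L^6)\cap L^\infty(L^2)$, produces the refined estimate $\rho\in L^{\gamma+\theta}(\Omega\times(0,T))$ precisely when $\gamma>8/5$, explaining the threshold in Theorem~\ref{thm.ex}.

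The limit passages $n\to\infty$, $\eps\to 0$, $\delta\to 0$ are then carried out sequentially. Strong convergence of $v$ in $L^2(L^2)$ and of $c$ (and hence of $\na c$) in $L^2(L^2)$ follows from the uniform $H^1_0$ and $H^2$ bounds via Aubin--Lions, so that the chemotaxis force $\rho\na c$ passes to the limit in $L^1$ because $\rho$ is weakly compact in $L^{\gamma+\theta}$ with $\gamma+\theta>1$. I expect the main obstacle to be, as usual in this framework, the strong convergence of $\rho$ needed to identify the limit of the pressure $p(\rho)$: one must establish the effective viscous flux identity and, because $8/5<\gamma<3$, apply the Feireisl truncation technique together with the oscillation defect measure and propagation of renormalized solutions to the continuity equation. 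The chemotaxis term is compatible with this machinery, since the identity is local in $x$ and tests against Riesz-type commutators involving $\rho$ and $v$, and the extra force $\rho\na c$ enters only through strongly convergent quantities; nevertheless, the bookkeeping of the oscillation defect measure under the reduced integrability of $\rho$ is the technically heaviest part of the argument.
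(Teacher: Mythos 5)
Your overall strategy coincides with the paper's: three-level approximation, energy estimate via Sugiyama's inequality and the $L^1$ bound on $c$, improved pressure integrability by testing with $\mathcal{B}[\rho^\theta-\overline{\rho^\theta}]$ after rewriting $\rho\na c=(\pa_t c-\Delta c+c)\na c$, and the Lions--Feireisl effective viscous flux/oscillation defect machinery for the limits. However, there is a genuine gap in your treatment of the chemotaxis term, and it sits exactly in the new range $8/5<\gamma<2$. You invoke $c\in L^2(0,T;H^2(\Omega))$ ``by elliptic regularity'' and then $\na c\in L^2(0,T;L^6(\Omega))$. This is not available: the energy bounds give $\pa_t c\in L^2(L^2)$ and $\rho\in L^\infty(L^\gamma)$ only, so $\Delta c=\pa_t c+c-\rho\in L^2(L^{\min(2,\gamma)})$, and for $\gamma<2$ you obtain at best $c\in L^2(W^{2,\gamma})$, hence $\na c\in L^2(L^{3\gamma/(3-\gamma)})$ with $3\gamma/(3-\gamma)<6$ (e.g.\ $24/7$ at $\gamma=8/5$). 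With only $\na c\in L^2(L^6)\cap L^\infty(L^2)$ as claimed, the bookkeeping for $I_3$-type terms $\int\na c\otimes\na c:\na\phi_B$ does not close in this range; the paper instead applies the maximal regularity result (Theorem \ref{thm.maxreg}) with $p=2$, $q=\gamma$, uses $\na c\in L^2(L^{3\gamma/(3-\gamma)})$ and $\pa_t c\in L^2(L^\gamma)$, and chooses $r$ and $\theta$ accordingly ($\theta\le 2\gamma-3$, resp.\ $\theta\le 5\gamma/3-2$), which works for all $\gamma>3/2$. You would need to redo your Hölder exponents along these lines.

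Related to this, you misattribute the threshold: the Bogovskii/pressure-improvement step does \emph{not} produce the condition $\gamma>8/5$ (it only needs $\gamma>3/2$ once maximal $L^p(L^q)$ regularity is used). The constraint $\gamma>8/5$ enters earlier, in the energy estimate itself: Sugiyama's inequality (Lemma \ref{lem.aux}), which you use to dominate $-\int_\Omega\rho c\,dx$ so that $E$ controls $H$, requires $m>2(d+1)/(d+2)=8/5$ for $d=3$; see Remark \ref{rem.gamma}. So your energy step silently uses $\gamma>8/5$ while your stated explanation of the threshold points to the wrong place. A further, minor, omission is at the approximate level: the derivation of the energy inequality produces an extra term $-\eps\int_\Omega c\,\Delta\rho\,dx$ from the artificial viscosity, which must be absorbed (the paper does this using the $\eps\|\na\rho^{\gamma/2}\|_{L^2}^2$ dissipation, respectively $\gamma\ge2$); your sketch does not account for it.
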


The condition on the initial datum  $c^0\in W^{2-2/\gamma,\gamma}_\nu(\Omega)$ can be rephrased in terms of interpolation or Besov spaces. Indeed, the condition is needed to apply the maximal regularity result of Theorem \ref{thm.maxreg}, and the regularity on the initial datum can be formulated in such spaces; see \cite[Theorem 10.22]{FeNo09}.
The definition of the pressure can be relaxed to 
$p\in C^0([0,\infty))\cap C^2(0,\infty)$, $p(0)=0$, $p'(\rho)>0$ for $\rho>0$, and
$\rho^{1-\gamma}p'(\rho)\to a>0$ as $\rho\to\infty$; see \cite[(2.1)]{FJN12}. 
The proof of the theorem is based on the existence theory for the compressible
Navier--Stokes equations \cite{FNP01}. More precisely, we add some artificial diffusion
and an artificial pressure term, construct Faedo--Galerkin solutions to the 
approximate problem, prove an approximate energy inequality for these solutions, 
and pass to the de-regularizing limit. Improved uniform bounds for the cell density in
$L^{\gamma+\theta}(\Omega)$ for some $\theta>0$ are derived by testing the mass balance
equation with a test function involving the Bogovskii operator. The novel part
is the estimate of the chemotaxis force term.

Next, we formulate the weak--strong uniqueness property of the system, meaning that a weak and 
a strong solution emanating from the same initial data coincide as long as the latter
exists. 

\begin{theorem}[Weak--strong uniqueness]\label{thm.wsu}
Let $(\rho,v,c)$ and $(\bar\rho,\bar v,\bar c)$ be two finite energy weak solutions
to \eqref{1.mass}--\eqref{1.bc} constructed in Theorem \ref{thm.ex} with the
same initial data. Assume that $(\bar\rho,\bar v,\bar c)$ satisfies the additional
regularity
\begin{align}\label{1.regul}
  0<c_p\le\bar\rho\le C_p, \ |\bar v|\le C_v\mbox{ a.e. in }\Omega\times(0,T), \quad
	|\na\bar\rho|,\,|\na^2\bar v|\in L^2(0,T;L^q(\Omega))
\end{align}
for $q>3$ and some constants $c_p,C_p,C_v>0$.
Then $\rho=\bar\rho$, $v=\bar v$, and $c=\bar c$ in $\Omega\times(0,T)$.
\end{theorem}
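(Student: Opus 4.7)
The plan is to follow the relative-energy method -- by now standard for the barotropic compressible Navier--Stokes system (Feireisl--Jin--Novotn\'y, Feireisl--Novotn\'y) -- suitably adapted to accommodate the chemotactic coupling and the reaction--diffusion equation for $c$. Once a relative energy inequality is in place, uniqueness will follow from Gronwall's lemma combined with the improved regularity \eqref{1.regul} of $(\bar\rho,\bar v,\bar c)$.

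First, I would introduce the relative energy functional associated with the physical free energy \eqref{1.E} through its Bregman divergence at $(\bar\rho,\bar v,\bar c)$,
\[
\mathcal{E}(\rho,v,c\mid\bar\rho,\bar v,\bar c) = \int_\Omega\Big(\tfrac12\rho|v-\bar v|^2 + \psi(\rho\mid\bar\rho) + \tfrac12|\na(c-\bar c)|^2 + \tfrac12|c-\bar c|^2 - (\rho-\bar\rho)(c-\bar c)\Big)dx,
\]
where $\psi(\rho\mid\bar\rho) = \psi(\rho) - \psi'(\bar\rho)(\rho-\bar\rho) - \psi(\bar\rho)$. Under \eqref{1.regul}, $\psi(\rho\mid\bar\rho)$ is equivalent to $|\rho-\bar\rho|^2$ on the essential set $\{\rho\sim\bar\rho\}$ and to $\rho^\gamma$ on the residual set, so the sign-indefinite cross-term $-(\rho-\bar\rho)(c-\bar c)$ can be absorbed by the positive quadratic ones and $\mathcal{E}$ is coercive in the norms of interest.

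Second, I would derive the relative energy inequality
\[
\mathcal{E}(t) + \int_0^t\!\int_\Omega\Big(\mu|\na(v-\bar v)|^2 + (\lambda+\mu)|\diver(v-\bar v)|^2 + |\pa_s(c-\bar c)|^2 + \tfrac{1}{\zeta}\rho|v-\bar v|^2\Big)dx\,ds \le \int_0^t R(s)\,ds
\]
by subtracting from the energy inequality of Definition~\ref{def.weak} the identities produced by inserting $(\bar\rho,\bar v,\bar c)$ into the weak formulations \eqref{1.mass}--\eqref{1.chem}; concretely, the momentum equation is tested with $\bar v$, the continuity equation with $\tfrac12|\bar v|^2 - \psi'(\bar\rho) + \bar c$, and the equation for $c$ with an appropriate combination of $c-\bar c$ and $\pa_t(c-\bar c)$. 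The remainder $R(s)$ then splits into classical barotropic terms (the convective $\int\rho(v-\bar v)\otimes(v-\bar v):\na\bar v\,dx$ and the pressure-relative potential $\int\diver\bar v\,[p(\rho)-p'(\bar\rho)(\rho-\bar\rho)-p(\bar\rho)]\,dx$), controlled by $\mathcal{E}$ and the viscous dissipation using $\bar v\in L^\infty$ together with $\na\bar v\in L^2(0,T;L^\infty(\Omega))$ (via $W^{2,q}\hookrightarrow W^{1,\infty}$ for $q>3$), plus chemotactic cross-terms of the form $\int(\rho-\bar\rho)\na\bar c\cdot(v-\bar v)\,dx$ and $\int\bar\rho\na(c-\bar c)\cdot(v-\bar v)\,dx$, handled by Cauchy--Schwarz and Young once one observes that $\na\bar c\in L^\infty(0,T;L^\infty(\Omega))$ by parabolic maximal regularity for \eqref{1.chem} given the bounds on $\bar\rho$. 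The drag-type contribution $\rho|v-\bar v|^2/\zeta$ has a favourable sign and joins the dissipation on the left-hand side. Altogether $R(s)\le C(s)\mathcal{E}(s)$ with $C\in L^1(0,T)$, so Gronwall together with $\mathcal{E}(0)=0$ forces $\mathcal{E}\equiv 0$, hence $\rho=\bar\rho$, $v=\bar v$, $c=\bar c$ on $(0,T)$.

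The principal obstacle is the careful derivation of the relative energy inequality with the correct sign structure. The physical free energy \eqref{1.E} contains the sign-indefinite coupling $-\rho c$, and the chemotactic force $\rho\na c$ links the momentum and reaction-diffusion equations asymmetrically; the test functions above must be chosen precisely so that the various cross products telescope into quadratic forms dominated by $\mathcal{E}$, thereby revealing the convex structure hidden in $E(\rho,v,c)$. Once this algebraic bookkeeping is correct, the remainder estimates are essentially standard, the subtle point being that one works with a.e.\ Lebesgue points in time since the energy inequality of Definition~\ref{def.weak} is only available for a.e.\ $t$.
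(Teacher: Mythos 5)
Your overall strategy (relative energy built as the Bregman divergence of \eqref{1.E}, formal derivation by testing with the strong solution, Gronwall) is the same as the paper's, but there is a genuine gap at the point you dismiss in one line: the claim that the sign-indefinite term $-\int_\Omega(\rho-\bar\rho)(c-\bar c)\,dx$ ``can be absorbed by the positive quadratic ones'' so that $\mathcal{E}$ is coercive. This is false as stated in the admissible range $8/5<\gamma<2$. On the set $\{\rho>R\}$ the Bregman part only controls $|\rho-\bar\rho|^\gamma$ (Lattanzio--Tzavaras), so H\"older leaves you with $\|c-\bar c\|_{L^{\gamma/(\gamma-1)}}$ raised to the power $\gamma/(\gamma-1)>2$, which cannot be absorbed by the quadratic $H^1$ part of the energy (bounding it crudely via the a priori $L^\infty(0,T;H^1)$ bound produces a large, non-small constant in front of $H$ and destroys coercivity). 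Even on $\{\rho\le R\}$ a pointwise Young inequality fails, because the constant $C_3$ in $\psi(\rho|\bar\rho)\ge C_3|\rho-\bar\rho|^2$ may be small, so the leftover coefficient on $(c-\bar c)^2$ exceeds the $\tfrac14$ available in $H$. The paper's proof resolves exactly this: it splits into $\{\rho\le R\}$ and $\{\rho>R\}$ and applies the Sugiyama-type interpolation lemma (Lemma \ref{lem.aux}) with $m=2$ and $m=\gamma$ respectively --- this is where the hypothesis $\gamma>8/5$ enters the uniqueness proof --- so that the gradient and Bregman terms appear with arbitrarily small coefficients and the entire excess is pushed into $\|c-\bar c\|_{L^1(\Omega)}$; this $L^1$ norm is then bounded by $\|c^0-\bar c^0\|_{L^1}+\|\rho^0-\bar\rho^0\|_{L^1}$ via the ODE for $\int_\Omega(c-\bar c)\,dx$ coming from mass conservation, hence vanishes for identical initial data. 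Without this mechanism neither the lower bound $E(\cdot|\cdot)\ge\tfrac12 H(\cdot|\cdot)-C(\mbox{data})$ nor the Gronwall closure ($R\le C\,H$ with $H$, not $\mathcal E$, on the right) goes through, and your final step ``$\mathcal E(0)=0$ forces $\mathcal E\equiv0$'' is not available since $\mathcal E$ has no sign a priori.

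Two smaller points. First, your remainder term $\int_\Omega(\rho-\bar\rho)\na\bar c\cdot(v-\bar v)\,dx$ again pairs $\rho-\bar\rho$ (only $L^\gamma$ where $\rho$ is large) against quantities without an obvious weight, so it needs the same kind of set-splitting care; the paper avoids a term of this shape altogether by keeping the chemotaxis remainder in the form $J_2=-\int_0^t\int_\Omega\na(c-\bar c)\cdot((\rho-\bar\rho)\bar v)\,dx\,ds$ and then eliminating $\rho-\bar\rho$ through the equation $\rho-\bar\rho=\pa_t(c-\bar c)-\Delta(c-\bar c)+(c-\bar c)$ and integrations by parts, which yields a bound by $\tfrac12\int|\pa_s(c-\bar c)|^2$ (absorbed by the dissipation) plus the quadratic $H^1$ terms. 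Second, the passage from the formal testing computation to an inequality valid for finite energy weak solutions with a comparison function of the regularity \eqref{1.regul} requires the density/regularity verification of the type in \cite{FNS11,FJN12} (the paper checks conditions \eqref{3.regul1}--\eqref{3.regul2}, using maximal regularity for $\bar c$); you should at least invoke this explicitly rather than derive the inequality as if both solutions were smooth.
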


The no-vacuum assumption $\bar\rho\ge c_p>0$ was also needed in \cite{FNS11} and in related contexts, e.g.\ for the weak--strong uniqueness property of Maxwell--Stefan systems \cite{HJT22}.
The proof of Theorem \ref{thm.wsu} is based on the relative energy method.
The relative energy, associated to the energy functional \eqref{1.E}, is given by
\begin{equation*}%\label{1.relE}
  E(\rho,v,c|r,u,z) = \int_\Omega\bigg(\psi(\rho|r) + \frac12\rho|v-u|^2
	+ \frac12\big(|\na(c-z)|^2 + (c-z)^2\big) - (\rho-r)(c-z)\bigg)dx,
\end{equation*}
where $\psi(\rho|r) = \psi(\rho)-\psi(r)-\psi'(r)(\rho-r)$ is the 
Bregman distance associated to $\psi$.
We show in Lemma \ref{lem.relE} on page \pageref{lem.relE} that
\begin{align}\label{1.relEineq}
  E((&\rho,v,c)(t)|(r,u,z)(t)) + \int_0^t\int_\Omega\big(\mu|\na(v-u|^2
	+ (\lambda+\mu)|\diver(v-u)|^2\big)dxds \\
	&{}+ \int_0^t\int_\Omega|\pa_s(c-z)|^2 dxds
	\le E(\rho^0,v^0,c^0|r^0,u^0,z^0) + \int_0^t R(\rho,v,c|r,u,z)ds, \nonumber
\end{align}
where $(\rho,v,c)$ is a finite energy weak solution to \eqref{1.mass}--\eqref{1.bc},
$(r,u,z)$ are smooth functions, and the remainder
$R(\rho,v,c|r,u,z)$ is defined in Lemma \ref{lem.relE} below. 
Finite energy weak solutions
to the compressible Navier--Stokes equations satisfying \eqref{1.relEineq} have been
called {\em suitable weak solutions} in \cite{FNS11}. It was shown in \cite{FJN12} 
that finite energy weak solutions in fact always satisfy the relative energy inequality
\eqref{1.relEineq} for smooth functions $(r,u,z)$.

Defining the modified relative energy 
$$
  H(\rho,v,c|r,u,z) = \frac12\int_\Omega\bigg(\psi(\rho|r) + \rho|v-u|^2
	+ \frac12|\na(c-z)|^2 + (c-z)^2\bigg)dx
$$	
and giving another weak solution $(r,u,z)=(\bar\rho,\bar{v},\bar{c})$ 
satisfying the regularity \eqref{1.regul}, the idea of the proof is to show that
\begin{align*}
  & R(\rho,v,c|\bar\rho,\bar{v},\bar{c}) \le CH(\rho,v,c|\bar\rho,\bar{v},\bar{c})
	\quad\mbox{and} \\
	& \int_\Omega(\rho-\bar\rho)(c-\bar{c})dx \le \frac12 H(\rho,v,c|\bar\rho,\bar{v},\bar{c})
	+ C(\rho^0,c^0),
\end{align*}
which leads to
$$
  \frac12 H((\rho,v,c)(t)|(\bar\rho,\bar{v},\bar{c})(t)) \le E(\rho^0,v^0,c^0|r^0,u^0,z^0)
	+ C(\rho^0,c^0) + C\int_0^t H(\rho,v,c|\bar\rho,\bar{v},\bar{c})ds
$$
and which implies, by Gronwall's lemma, that $H((\rho,v,c)(t)|(\bar\rho,\bar{v},\bar{c})(t))=0$,
Consequently $\rho(t)=\bar\rho(t)$, $v(t)=\bar v(t)$, and $c(t)=\bar c(t)$ for $t>0$.

We finish the introduction by discussing the state of the art.
The global existence of finite energy weak solutions to the compressible Navier--Stokes
equations with adiabatic exponents $\gamma>3/2$ 
was shown in \cite{FNP01}. 
The range of $\gamma$ can be extended to $\gamma>1$ for axisymmetric initial data \cite{JiZh03}
or for a class of density-dependent viscosity coefficients \cite{MeVa07}, for instance.
Germain \cite{Ger11} proved a relative energy
inequality and established the weak--strong uniqueness property for solutions 
to the compressible Navier--Stokes equations with an 
integrable spatial density gradient. Feireisl et al.\ \cite{FNS11} 
proved the existence of so-called suitable weak solutions  
satisfying a general relative energy inequality with respect to any
sufficiently regular pair of functions and concluded the weak--strong uniqueness property.

Compressible Euler equations with chemotaxis force have been introduced in \cite{SAGGPB03}
to describe early stages of vascologenesis. As remarked in \cite[Section 3]{AGS04},
the fluid equations may also include viscous terms. This leads to chemotaxis compressible
Navier--Stokes equations, which have been analyzed in \cite{AiAl16} with the pressure
function $p(\rho)=\max\{0,\rho-\rho_c\}^\gamma$, where $\gamma>3$ and $\rho_c>0$ is the
so-called close-packing density. A viscoelastic mechanical interaction 
of the cells with the substratum was added to the compressible Euler equations in \cite{TAP06}.
Related models are the incompressible Navier--Stokes equations coupled to the
chemotaxis Keller--Segel system via the fluid velocity, proposed in \cite{TCDWK05}
and analyzed in, e.g., \cite{Win12}.

The paper is organized as follows. Section \ref{sec.ex} is devoted to the
proof of Theorem \ref{thm.ex}. The technical relative energy inequality \eqref{1.relEineq} is proved in Section \ref{sec.relE}. Based on this inequality, Theorem \ref{thm.wsu} is then shown in Section \ref{sec.wsu}. Finally, some auxiliary results are presented in Appendix \ref{sec.aux}.

%%%%%%%%%%%%%%%%%%%%%%%%%%%%%%%%%%%%%%%%%%%%%%%%%%%%%%%%%%%%%%%%%%%%%%%%%%%%%%%

\section{Global existence of solutions}\label{sec.ex}

In this section, we prove Theorem \ref{thm.ex}. For this, we proceed as in
\cite{FNP01} by constructing an approximate scheme based on a regularized system,
deriving uniform energy estimates, and assing to the de-regularization limit. For later use, we note the relations between the pressure $p(\rho)$ and the associated internal energy $\psi(\rho)=\rho\int_0^\rho s^{-2}p(s)ds$:
\begin{equation}\label{1.p}
  p(\rho) = \rho\psi'(\rho)-\psi(\rho), \quad \na p(\rho) = \rho\na\psi'(\rho)
	\quad\mbox{for smooth }\rho.
\end{equation} 

\subsection{Regularized system}\label{sec.galerkin}

We solve first the following regularized system for $\delta>0$, $\eps>0$, and $\beta>4$:
\begin{align}
  & \pa_t\rho + \diver(\rho v) = \eps\Delta\rho, \quad
	\pa_t c = \Delta c - c + \rho, \label{2.mass} \\
	& \pa_t(\rho v) + \diver(\rho v\otimes v) + \na p(\rho) + \eps\na\rho\cdot\na v
	+ \delta\na\rho^\beta \label{2.mom} \\
	&\phantom{xx}{}= \mu\Delta v + (\lambda+\mu)\na\diver v
	+ \rho\na c - \frac{\rho v}{\zeta}\quad\mbox{in }\Omega,\ t>0, \nonumber
\end{align}
subject to the initial and boundary conditions
\begin{align}
  & \rho(\cdot,0) = \rho^0_{\delta}, \quad v(\cdot,0) = v^0, \quad c(\cdot,0) = c^0
	\quad\mbox{in }\Omega, \label{2.ic} \\
	& \na\rho\cdot\nu = 0, \quad v = 0, \quad \na c\cdot\nu = 0 
	\quad\mbox{on }\pa\Omega,\ t>0, \label{2.bc}
\end{align}
where $\rho^0_{\delta}$ is a smooth strictly positive function such that
$\rho^0_{\delta}\to\rho^0$ strongly in $L^\gamma(\Omega)$. 
The artificial viscosity term $\eps\Delta\rho$
is balanced by the term $\eps\na\rho\cdot\na v$ in the momentum equation to control the
energy. The artificial pressure term $\delta\na\rho^\beta$ is needed
to derive an $L^{\gamma+\theta}(\Omega)$ estimate for the density with $\theta>0$.

The existence of strong solutions to
\eqref{2.mass}--\eqref{2.bc} was shown in \cite[Section 2]{FNP01} without the
chemotaxis term $\rho\na c$. Here, we sketch the proof for the problem including
the chemotaxis coupling. 
As in \cite{FNP01}, we use the Faedo--Galerkin method. Let $(\psi_n)$ be a sequence of
eigenfunctions of the Laplacian with homogeneous Dirichlet boundary conditions and let
$X_n=\operatorname{span}\{\psi_1,\ldots,\psi_n\}$. Then, following the proof of
\cite[Section~7.7]{NoSt04} or \cite[Chapter~7]{Fei04}, 
we obtain the existence of a unique local strong solution $(\rho_n,v_n,c_n)$ on $(0,T_n)$ such that $v_n\in C^1([0,T_n];X_n)$ and
\begin{align*}
  & \rho_n,\,\pa_t\rho_n,\,\na\rho_n,\,\na^2\rho_n,\,c_n,\,\pa_t c_n,\,\na c_n,\,\na^2 c_n
	\quad\mbox{are H\"older continuous on }\overline\Omega\times[0,T_n], \\
	& \rho_n(x,t)>0, \quad c_n(x,t)\ge 0\quad\mbox{for any }(x,t)\in\overline\Omega\times[0,T_n].
\end{align*}
To obtain global solutions, i.e.\ $T=T_n$, we derive an energy inequality for the
approximate system. 

%%%%%%%%%%%%%%%%%%%

\subsection{Energy inequality for the approximate system}

An energy-type inequality has been derived in \cite[Section 2.2]{AiAl16}. 
Here, we use a different energy functional by including the $H^1(\Omega)$ norm of $c$.
We show an inequality for the energies $E(\rho,v,c)$ and $H(\rho,v,c)$,
defined in \eqref{1.E} and \eqref{1.H}, respectively.

\begin{lemma}\label{lem.Hn}
Let $(\rho_n,v_n,c_n)$ be a strong solution to \eqref{2.mass}--\eqref{2.bc} constructed
in the previous subsection. Then there exists $C>0$ independent of $(n,\delta,\eps)$
such that for any $0<t<T_n$,
\begin{align*}
  E((&\rho_n,v_n,c_n)(t)) 
	+ \int_0^t\int_\Omega\big(\mu|\na v_n|^2 + (\lambda+\mu)|\diver v_n|^2\big)dxds
	+ \frac{4\delta\eps}{\beta}\int_0^t\int_\Omega|\na\rho_n^{\beta/2}|^2dxds \\
	&\phantom{xx}{}+ \frac{4\eps}{\gamma}\int_0^t\int_\Omega|\na\rho_n^{\gamma/2}|^2 dxds
	+ \bigg(1-\frac{\eps}{4}\bigg)\int_0^t\int_\Omega|\pa_s c_n|^2 dx ds
	+ \frac{1}{\zeta}\int_0^t\int_\Omega\rho_n|v_n|^2 dxds \\
	&\le E(\rho^0,v^0,v^0) + 2\eps\int_\Omega\rho^\gamma dx + C\eps, \\
  H((&\rho_n,v_n,c_n)(t)) + \int_0^t\int_\Omega\big(\mu|\na v_n|^2 
	+ (\lambda+\mu)|\diver v_n|^2\big)dxds \\
	&\phantom{xx}{}+ \frac{4\delta\eps}{\beta}\int_0^t\int_\Omega|\na\rho_n^{\beta/2}|^2 dxds
	+ \bigg(1-\frac{\eps}{4}\bigg)\int_\Omega\int_\Omega|\pa_s c_n|^2 dxds
	+ \frac{1}{\zeta}\int_0^t\int_\Omega \rho_n|v_n|^2 dxds \\
	&\le \big(E(\rho^0_{\delta},v^0,c^0) + C(\rho^0,v^0) + C\eps t\big)e^{C\eps t}.
\end{align*}
\end{lemma}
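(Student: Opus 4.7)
The proof proceeds via four coupled test-function computations whose sum enjoys a remarkable cancellation, plus an integration-by-parts trick to control the $\eps$-residual. First, testing the momentum equation \eqref{2.mom} by $v_n$ produces (via the product rule and the regularized mass equation) the term $\tfrac{\eps}{2}\int_\Omega\Delta\rho_n|v_n|^2\,dx$, which is cancelled exactly by $\eps\int_\Omega(\na\rho_n\cdot\na v_n)\cdot v_n\,dx = -\tfrac{\eps}{2}\int_\Omega\Delta\rho_n|v_n|^2\,dx$ (integration by parts, using $v_n|_{\pa\Omega}=0$). Testing the mass equation \eqref{2.mass} by $\psi'(\rho_n)+\tfrac{\delta\beta}{\beta-1}\rho_n^{\beta-1}$ and using $p=\rho\psi'-\psi$ arranges for the pressure-work $\int(p(\rho_n)+\delta\rho_n^\beta)\diver v_n$ to cancel against the momentum identity and produces the dissipations $\tfrac{4\eps}{\gamma}\|\na\rho_n^{\gamma/2}\|_{L^2}^2 + \tfrac{4\delta\eps}{\beta}\|\na\rho_n^{\beta/2}\|_{L^2}^2$. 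Testing the regularized $c$-equation by $\pa_t c_n$ (using $\na c_n\cdot\nu=0$) gives $\tfrac{d}{dt}\int(\tfrac12|\na c_n|^2+\tfrac12 c_n^2)+\int|\pa_t c_n|^2=\int\rho_n\pa_t c_n$. Finally, computing $-\tfrac{d}{dt}\int\rho_n c_n$ from both evolution equations yields $\eps\int\na\rho_n\cdot\na c_n-\int\rho_n v_n\cdot\na c_n-\int\rho_n\pa_t c_n$.

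Summing these four identities, the chemotaxis work $\int\rho_n v_n\cdot\na c_n$ and the coupling term $\int\rho_n\pa_t c_n$ cancel pairwise, leaving
\[
  \frac{d}{dt}\Bigl(E(\rho_n,v_n,c_n) + \tfrac{\delta}{\beta-1}\int_\Omega\rho_n^\beta\,dx\Bigr) + \text{(all dissipations)} = \eps\int_\Omega\na\rho_n\cdot\na c_n\,dx.
\]
To bound the residual I integrate by parts and substitute the $c$-equation: $\eps\int\na\rho_n\cdot\na c_n = -\eps\int\rho_n\Delta c_n = -\eps\int\rho_n\pa_t c_n - \eps\int\rho_n c_n + \eps\int\rho_n^2$. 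The cross-term $-\eps\int\rho_n c_n\le 0$ since $\rho_n,c_n\ge 0$; Young's inequality with parameter $\eps/4$ yields $-\eps\int\rho_n\pa_t c_n\le \tfrac{\eps}{4}\|\pa_t c_n\|_{L^2}^2 + \eps\|\rho_n\|_{L^2}^2$, and the $\eps/4$ piece is absorbed into the $\int|\pa_t c_n|^2$ dissipation, producing precisely the coefficient $(1-\eps/4)$. The leftover $2\eps\|\rho_n\|_{L^2}^2$ is then controlled by $\rho^2\le C+\rho^\gamma$ when $\gamma\ge 2$, or, when $\gamma\in(8/5,2)$, by interpolation between the conserved $L^1$-mass and the bound $\rho_n\in L^{3\gamma}$ coming from $\rho_n^{\gamma/2}\in H^1\hookrightarrow L^6$. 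Time integration yields the first inequality.

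For the bound on $H$, I use the algebraic identity $H = E - \tfrac12\int\psi(\rho_n) - \tfrac14\int|\na c_n|^2 + \int\rho_n c_n$ and apply Sugiyama's inequality \eqref{1.sugiyama} to conclude $H \le E + C_1(\gamma)\|c_n\|_{L^1}^{C_2(\gamma)}$. Since $\|\rho_n\|_{L^1}=\|\rho^0_\delta\|_{L^1}$ is conserved by the regularized mass equation, integrating the $c$-equation over $\Omega$ gives $\tfrac{d}{dt}\|c_n\|_{L^1}+\|c_n\|_{L^1}=\|\rho^0_\delta\|_{L^1}$, hence $\|c_n\|_{L^1}(t)\le\|c^0\|_{L^1}+\|\rho^0_\delta\|_{L^1}\le C(\rho^0,c^0)$. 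Combining with the first inequality and closing the dependence on $\int\rho_n^\gamma$ via Gronwall's lemma produces the second inequality with the factor $e^{C\eps t}$. The principal obstacle is the residual $\eps\int\na\rho_n\cdot\na c_n$: no direct $L^2$-bound on $\na\rho_n$ is available, so the integration-by-parts step through the $c$-equation, with Young's constant calibrated to give the $(1-\eps/4)$ coefficient, is the essential new ingredient of these estimates.
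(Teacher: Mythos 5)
Your proposal is correct and follows essentially the same route as the paper: the same cancellation structure (chemotaxis work rewritten via the mass equation, coupling term absorbed by testing the $c$-equation with $\pa_t c_n$), the same residual $\eps\int_\Omega\na\rho_n\cdot\na c_n\,dx=-\eps\int_\Omega\rho_n\Delta c_n\,dx$ handled identically (drop $-\eps\int\rho_n c_n\le 0$, Young with weight $\eps/4$, control of $\eps\|\rho_n\|_{L^2}^2$ by $\rho_n^\gamma$ for $\gamma\ge 2$ or by the $\eps$-weighted $\na\rho_n^{\gamma/2}$ dissipation for $8/5<\gamma<2$), and the same Sugiyama estimate, $L^1$-ODE bound for $c_n$, and Gronwall step for the $H$-inequality. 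The only difference is cosmetic bookkeeping: you sum four separate test-function identities where the paper uses the combined test function $\psi'(\rho_n)-\tfrac12|v_n|^2+\tfrac{\delta\beta}{\beta-1}\rho_n^{\beta-1}$ and substitutes sequentially.
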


\begin{proof}
{\em Step 1: Energy inequality for $E$.}
We choose the test function $\psi'(\rho_n)-\frac12|v_n|^2+\delta\beta\rho_n^{\beta-1}/(\beta-1)$ 
in the weak formulation of the first equation in \eqref{2.mass} and the test function $v_n$ 
in the weak formulation of \eqref{2.mom}. Adding both equations and taking into account
\eqref{1.p}, some terms cancel, and we arrive after a standard computation at
\begin{align}\label{2.ener1}
  \frac{d}{dt}&\int_\Omega\bigg(\psi(\rho_n) + \frac12\rho_n|v_n|^2 
	+ \frac{\delta}{\beta-1}\rho_n^\beta\bigg)dx
	+ \int_\Omega\big(\mu|\na v_n|^2 + (\lambda+\mu)|\diver v_n|^2\big)dx \\
	&{}+ \frac{1}{\zeta}\int_\Omega\rho_n|v_n|^2 dx 
	+ \frac{4\delta\eps}{\beta}\int_\Omega|\na\rho_n^{\beta/2}|^2dx 
	+ \frac{4\eps}{\gamma}\int_\Omega|\na\rho_n^{\gamma/2}|^2 dx 
	= \int_\Omega\rho_n\na c_n\cdot v_n dx. \nonumber
\end{align}
We estimate the right-hand side by integrating by parts 
and using equation \eqref{2.mass} for $\rho_n$:
\begin{align}
   \int_\Omega\rho_n\na c_n\cdot v_n dx &= -\int_\Omega c_n\diver(\rho_n v_n)dx 
	= \int_\Omega c_n(\pa_t\rho_n-\eps\Delta\rho_n)dx \label{2.aux} \\
	&= \frac{d}{dt}\int_\Omega\rho_n c_n dx - \int_\Omega\rho_n\pa_t c_n dx
	- \eps\int_\Omega c_n\Delta\rho_n dx. \nonumber
\end{align}
Taking into account the second equation in \eqref{2.mass}, 
the second term on the right-hand side is written as
\begin{align*}
  -\int_\Omega\rho_n\pa_t c_n dx 
	&= -\int_\Omega(\pa_t c_n-\Delta c_n+c_n)\pa_t c_ndx \\
	&= -\int_\Omega|\pa_t c_n|^2 dx - \frac12\frac{d}{dt}\int_\Omega(|\na c_n|^2 + c_n^2)dx.
\end{align*}
Because of $\rho_n c_n\ge 0$, 
the last term on the right-hand side of \eqref{2.aux} becomes
\begin{align*}
  -\eps\int_\Omega c_n\Delta\rho_n dx
	&= -\eps\int_\Omega\rho_n\Delta c_n dx
	= -\eps\int_\Omega\rho_n(\pa_t c_n + c_n - \rho_n)dx \\
	&\le -\eps\int_\Omega\rho_n\pa_t c_n dx + \eps\int_\Omega\rho_n^2 dx 
	\le \frac{\eps}{4}\int_\Omega|\pa_t c_n|^2 dx + 2\eps\int_\Omega\rho_n^2 dx \\
	&\le \frac{\eps}{4}\int_\Omega|\pa_t c_n|^2 dx + 2\eps\int_\Omega\rho_n^\gamma dx
	+ C(\gamma,\Omega)\eps,
\end{align*}
where the last inequality follows from $\gamma\ge 2$. (We observe that at this point,
we can weaken the condition to $\gamma>8/5$ by using the Gagliardo--Nirenberg inequality 
and the estimate for $\|\na\rho_n^{\gamma/2}\|_{L^2(\Omega)}$ from \eqref{2.ener1}.)
We insert these estimates into \eqref{2.aux}:
\begin{align*}
  \int_\Omega\rho_n\na c_n\cdot v_n dx
	&\le	-\frac{d}{dt}\int_\Omega\bigg(\frac12(|\na c_n|^2
	+ c_n^2) - \rho_n c_n\bigg)dx - \int_\Omega|\pa_t c_n|^2 dx \\
	&\phantom{xx}{}+ \frac{\eps}{4}\int_\Omega|\pa_t c_n|^2 dx 
	+ 2\eps\int_\Omega\rho_n^\gamma dx + C(\gamma,\Omega)\eps.
\end{align*}
Therefore, \eqref{2.ener1} leads to
\begin{align}\label{2.eta}
  \frac{d}{dt}&\int_\Omega\bigg(\frac12\rho_n|v_n|^2 + \psi(\rho_n) 
	+ \frac12(|\na c_n|^2 + c_n^2) - \rho_n c_n 
	+ \frac{\delta}{\beta-1}\rho_n^\beta\bigg)dx \\
	&\phantom{xx}{}+ \int_\Omega\big(\mu|\na v_n|^2 + (\lambda+\mu)|\diver v_n|^2\big)dx
	+ \frac{4\delta\eps}{\beta}\int_\Omega|\na\rho_n^{\beta/2}|^2dx 
	+ \frac{4\eps}{\gamma}\int_\Omega|\na\rho_n^{\gamma/2}|^2 dx \nonumber \\
	&\phantom{xx}{}+ \bigg(1-\frac{\eps}{4}\bigg)|\pa_t c_n|^2 dx 
	+ \frac{1}{\zeta}\int_\Omega\rho_n|v_n|^2 dx
	\le 2\eps\int_\Omega\rho_n^\gamma dx + C\eps, \nonumber
\end{align}
where $C>0$ only depends on $\gamma$ and $\mbox{meas}(\Omega)$ but is independent of
$n$, $\delta$, and $\eps$. This proves the inequality for $E(\rho_n,v_n,c_n)$.

{\em Step 2: Energy inequality for $H$.}
We need to estimate $\int_\Omega\rho_n c_n dx$ in $E(\rho_n,v_n,c_n)$.
By Lemma \ref{lem.aux} in Appendix \ref{sec.aux}, applied to $m=\gamma$, 
$\kappa=1/(2(\gamma-1))$, and $\xi=1/4$,
\begin{equation}\label{2.aux2}
  \int_\Omega\rho_n c_n dx \le \frac{1}{2(\gamma-1)}\|\rho_n\|_{L^\gamma(\Omega)}^\gamma
	+ \frac14\|\na c_n\|_{L^2(\Omega)}^2 + C_1(\gamma)\|c_n\|_{L^1(\Omega)}^{C_2(\gamma)}.
\end{equation}
Equation \eqref{2.mass} implies that the mass is conserved, $\|\rho_n(t)\|_{L^1(\Omega)}
= \|\rho^0_{\delta}\|_{L^1(\Omega)}$ for $0<t<T_n$. Furthermore, by the second
equation in \eqref{2.mass},
$$
  \frac{d}{dt}\int_\Omega c_n dx = \int_\Omega\rho_n dx - \int_\Omega c_n dx.
$$
This is an ordinary differential equation for $t\mapsto\|c_n(t)\|_{L^1(\Omega)}$, and
a comparison principle as well as the nonnegativity of $c_n$ imply that
$$
  \|c_n(t)\|_{L^1(\Omega)} = \int_\Omega c_n dx \le\max\bigg\{\int_\Omega c^0dx,
	\int_\Omega\rho^0_{\delta}dx\bigg\} \le C,
$$
where $C>0$ is independent of $\delta$. Thus, we conclude from \eqref{2.aux2} 
and $\rho_n^\gamma/(2(\gamma-1)) = \frac12\psi(\rho_n)$ that
$$
  \int_\Omega\rho_n c_n dx \le \frac12\int_\Omega\psi(\rho_n) dx
	+ \frac14\|\na c_n\|_{L^2(\Omega)}^2 + C(\rho^0,c^0).
$$
It follows from the definitions of $E(\rho_n,v_n,c_n)$ and $H(\rho_n,v_n,c_n)$ that
\begin{align*}
  E(\rho_n,v_n,c_n) &\ge \int_\Omega\bigg(\frac12\psi(\rho_n) + \frac12\rho_n|v_n|^2 
	+ \frac14|\na c_n|^2 + \frac12 c_n^2\bigg)dx - C(\rho^0,c^0) \\
	&= H(\rho_n,v_n,c_n) - C(\rho^0,c^0).
\end{align*}
We insert these estimates in \eqref{2.eta} and integrate over $(0,t)$ for $0<t<T_n$:
\begin{align*}
  H((&\rho_n,v_n,c_n)(t)) + \int_0^t\int_\Omega\big(\mu|\na v_n|^2 
	+ (\lambda+\mu)|\diver v_n|^2\big)dxds 
	+ \frac{4\delta\eps}{\beta}\int_0^t\int_\Omega|\na\rho_n^{\beta/2}|^2 dxds \\
	&\phantom{xx}{}+ \frac{4\eps}{\gamma}\int_0^t\int_\Omega|\na\rho_n^{\gamma/2}|^2 dxds
	+ \bigg(1-\frac{\eps}{4}\bigg)\int_0^t\int_\Omega|\pa_s c_n|^2 dxds
	+ \frac{1}{\zeta}\int_0^t\int_\Omega \rho_n|v_n|^2 dxds \\
	&\le E(\rho^0_{\delta},v^0,c^0) + C\eps\int_0^t\int_\Omega H(\rho_n,v_n,c_n)dxds 
	+ C(\rho^0,c^0) + C\eps t,
\end{align*}
where we used $\int_\Omega\rho_n^\gamma dx\le CH(\rho_n,v_n,c_n)$. 
An application of Gronwall's lemma finishes the proof.
\end{proof}

Lemma \ref{lem.Hn} allows us to conclude as in \cite[Section 2.3]{FNP01} that
$T=T_n$. Moreover, it yields the following estimates uniform in $(n,\delta,\eps)$:
\begin{equation}\label{2.bounds}
\begin{aligned}
  (\rho_n) &\quad\mbox{is uniformly bounded in }L^\infty(0,T;L^\gamma(\Omega)), \\
  (\sqrt{\rho_n}v_n) &\quad\mbox{is uniformly bounded in }L^\infty(0,T;L^2(\Omega;\R^3)), \\
	(\na v_n) &\quad\mbox{is uniformly bounded in }L^2(0,T;L^2(\Omega;\R^{3\times 3})), \\
	(c_n) &\quad\mbox{is uniformly bounded in }L^\infty(0,T;H^1(\Omega))\cap
	H^1(0,T;L^2(\Omega)).
\end{aligned}
\end{equation}

%%%%%%%%%%%%%%%%%%%%

\subsection{Limit $(n,\delta,\eps)\to(\infty,0,0)$}

The limit $n\to\infty$ can be performed as in \cite[Section 2.3]{AiAl16} via the
Aubin--Lions compactness lemma. This gives a solution $(\rho_\delta,v_\delta,c_\delta)$
to \eqref{2.mass}--\eqref{2.bc}. It satisfies the energy inequalities in Lemma \ref{lem.Hn}.
In particular, we conclude a uniform bound for $\rho_\delta$ 
in $L^\infty(0,T;L^\gamma(\Omega))$. By the existence theory for the compressible
Navier--Stokes equations, we can pass to the limit $(\delta,\eps)\to 0$;
see, e.g., \cite{Fei04,NoSt04}.  Indeed, to derive a uniform
estimate for the mass density in $L^{\gamma+\theta}(\Omega\times(0,T))$ 
for some $\theta>0$, we need to use the test function 
$$
  \phi_B = \mathcal{B}\bigg(\rho_\delta^\theta - \frac{1}{|\Omega|}
	\int_\Omega\rho_\delta^\theta dx\bigg),
$$
in the weak formulation of the approximate momentum equation,
where $\mathcal{B}$ is the Bogovskii operator \cite[Section 3.3.1.2]{NoSt04}.
Compared to the compressible Navier--Stokes equations, the momentum equation includes 
the chemotaxis term $\rho_\delta\na c_\delta$, which needs to be estimated. This means that
we need a bound for
\begin{equation}\label{2.I}
  I = \int_0^T\int_\Omega\rho_\delta\na c_\delta\cdot\phi_B dxdt.
\end{equation}
Using the second equation in \eqref{2.mass},
$$
  \rho_\delta\na c_\delta = (\pa_t c_\delta - \Delta c_\delta + c_\delta)\na c_\delta
	= (\pa_t c_\delta + c_\delta)\na c_\delta - \diver(\na c_\delta\otimes\na c_\delta)
	+ \frac12\na|\na c_\delta|^2,
$$
we can write $I=I_1+\cdots+I_4$, where
\begin{align*}
  I_1 &= \int_0^T\int_\Omega\pa_t c_\delta\na c_\delta\cdot\phi_B dxdt, &
	I_2 &= \int_0^T\int_\Omega c_\delta\na c_\delta\cdot\phi_B dxdt, \\
	I_3 &= \int_0^T\int_\Omega\na c_\delta\otimes\na c_\delta:\na\phi_B dxdt, &
	I_4 &= -\frac12\int_0^T\int_\Omega|\na c_\delta|^2 \diver\phi_B dxdt.
\end{align*}

We start with the term $I_1$. First, let $\gamma>2$. By parabolic regularity theory (see Theorem \ref{thm.maxreg} in the Appendix with $p=q=\gamma$), the continuous embedding $W^{2-2/\gamma,\gamma}(\Omega)\hookrightarrow W^{1,\gamma}(\Omega)$ and the second equation in \eqref{2.mass} yield
\begin{align}\label{2.estc}
  \|\na c_\delta\|_{L^2(0,T;L^\gamma(\Omega))} 
	&\le C\|c_\delta\|_{L^2(0,T;W^{2-2/\gamma,\gamma}(\Omega))} \\
	&\le C\big(\|\rho_\delta\|_{L^\infty(0,T;L^\gamma(\Omega))}
	+ \|c^0\|_{W^{2-2/\gamma,\gamma}(\Omega)}\big)
	\le C. \nonumber
\end{align}
Hence, using H\"older's inequality, the assumption $\gamma>2$, 
and the previous estimates as well as the uniform estimates from the energy inequality,
$$
  I_1 \le \|\pa_t c_\delta\|_{L^2(0,T;L^2(\Omega))}
	\|\na c_\delta\|_{L^2(0,T;L^\gamma(\Omega))}
	\|\phi_B\|_{L^\infty(0,T;L^{2\gamma/(\gamma-2)}(\Omega))}
	\le C\|\phi_B\|_{L^\infty(0,T;W^{1,r}(\Omega))},
$$
where $r=6\gamma/(5\gamma-6)$ is such that $W^{1,r}(\Omega)\hookrightarrow 
L^{2\gamma/(\gamma-2)}(\Omega)$. 
%If $\gamma=2$, we replace $L^{2\gamma/(\gamma-2)}(\Omega)$ by 
%$L^\infty(\Omega)$ and choose $r>3$. 
We deduce from the boundedness of 
$\mathcal{B}:L_0^r(\Omega)\to W_0^{1,r}(\Omega)$ for $1<r<\infty$,
where $L_0^r(\Omega)$ is the space of all $L^r(\Omega)$ functions $u$ satisfying $\int_\Omega udx=0$, that
\begin{align*}
	I_1 &\le C\bigg\|\rho_\delta^\theta - \frac{1}{|\Omega|}
	\int_\Omega\rho_\delta^\theta dx\bigg\|_{L^\infty(0,T;L^r(\Omega))}
	\le C\|\rho_\delta^\theta\|_{L^\infty(0,T;L^r(\Omega))} \\
	&\le C\|\rho_\delta\|_{L^\infty(0,T;L^{r\theta}(\Omega))}^\theta
	\le C\|\rho_\delta\|_{L^\infty(0,T;L^\gamma(\Omega))}^\theta\le C.
\end{align*}
The last but one step follows if $r\theta\le\gamma$, which requires the choice $0<\theta\le 5\gamma/6-1$, and the last step is a consequence of the energy inequality. 

Next, let $3/2<\gamma\le 2$. We apply Theorem \ref{thm.maxreg} with $p=2$, $q=\gamma$ to find that 
$$
  \|c_\delta\|_{L^2(0,T;W^{2,\gamma}(\Omega))}
  + \|\pa_t c_\delta\|_{L^2(0,T;L^\gamma(\Omega))}
  \le C\big(\|\rho_\delta\|_{L^2(0,T;L^\gamma(\Omega))}
  + \|c^0\|_{W^{1,\gamma}(\Omega)}\big) \le C.
$$
Hence, we deduce from the continuous embedding $W^{2,\gamma}(\Omega)\hookrightarrow W^{1,3\gamma/(3-\gamma)}(\Omega)$ that
\begin{align*}
  I_1 &\le \|\pa_t c_\delta\|_{L^2(0,T;L^\gamma(\Omega))}
  \|\na c_\delta\|_{L^2(0,T;L^{3\gamma/(3-\gamma)}(\Omega))}
  \|\phi_B\|_{L^\infty(0,T;L^{3\gamma/(4\gamma-6)}(\Omega))} \\
  &\le C\|\pa_t c_\delta\|_{L^2(0,T;L^\gamma(\Omega))}
    \|c_\delta\|_{L^2(0,T;W^{2,\gamma}(\Omega))}
    \|\phi_B\|_{L^\infty(0,T;W^{1,r}(\Omega))},
\end{align*}
where now $r=3\gamma/(5\gamma-6)$. We choose $\theta>0$ such that $r\theta\le\gamma$, which is equivalent to $\theta\le 5\gamma/3-2$, and we can choose $\theta>0$ satisfying this inequality. Then, arguing as in the case $\gamma\ge 2$,
$$
  I_1 \le C\|\rho_\delta\|_{L^\infty(0,T;L^{r\theta}(\Omega))}^\theta
  \le C\|\rho_\delta\|_{L^\infty(0,T;L^\gamma(\Omega))}^\theta
  \le C.
$$

For the term $I_3$, we consider again first the case $\gamma>2$:
\begin{align*}
  I_3 \le \|\na c_\delta\|_{L^2(0,T;L^\gamma(\Omega))}^2
  \|\na\phi_B\|_{L^\infty(0,T;L^r(\Omega))}
  \le C\|\rho_\delta\|_{L^\infty(0,T;L^{r\theta}(\Omega))}^\theta
  \le C,
\end{align*}
where $r=\gamma/(\gamma-2)$, and the last inequality follows if $r\theta\le\gamma$, which is equivalent to $\theta\le\gamma-2$.
If $3/2<\gamma\le 2$, we proceed similarly as for $I_1$:
\begin{align*}
  I_3 \le \|\na c_\delta\|_{L^2(0,T;L^{3\gamma/(3-\gamma)}(\Omega))}^2
  \|\na\phi_B\|_{L^\infty(0,T;L^r(\Omega))}
  \le C\|\rho_\delta\|_{L^\infty(0,T;L^{r\theta}(\Omega))}^\theta
    \le C,
\end{align*}
where $r=\gamma/(2\gamma-3)$ and we need $r\theta\le\gamma$ or, equivalently, $\theta\le 2\gamma-3$.

The term $I_2$ is estimated in a similar way as $I_1$, and $I_4$ can be bounded as $I_3$. This shows that $I$ is bounded and provides a uniform estimate for $\rho_\delta$ in $L^{\gamma+\theta}(\Omega\times(0,T))$.
Now we can proceed as in \cite[Section 7.3]{NoSt04} to prove the strong convergence of the pressure.

%%%%%%%%%%%%%%%%%%%%

\begin{remark}[On the condition on $\gamma$ in \cite{AiAl16}]\label{rem.AiAl16}\rm
A\"{\i}ssa and Alexandre have estimated the term $I$, defined in \eqref{2.I}, in a different way.
They used the test function $\psi_B=\mathcal{B}(\rho_\delta - |\Omega|^{-1}
\int_\Omega\rho_\delta dx)$: 
\begin{align*}
  I &\le \|\rho_\delta\|_{L^2(\Omega\times(0,T))}\|\na c_\delta\|_{L^2(\Omega\times(0,T))}
	\|\psi_B\|_{L^\infty(0,T;L^\infty(\Omega))} \\
	&\le \|\rho_\delta\|_{L^2(\Omega\times(0,T))}\|\na c_\delta\|_{L^2(\Omega\times(0,T))}
	\|\rho_\delta\|_{L^\infty(0,T;L^r(\Omega))},
\end{align*}
which is a consequence of the estimate $\|\mathcal{B}(f)\|_{L^\infty(\Omega)}
\le C\|\mathcal{B}(f)\|_{W^{1,r}(\Omega)}\le C\|f\|_{L^r(\Omega)}$ choosing $r>d=3$.
Thus, the technique of \cite{AiAl16} only works for $\gamma>3$.
\qed
\end{remark}

\begin{remark}[On the condition $\gamma>8/5$]\label{rem.gamma}\rm
This restriction is needed to estimate the integral $\int_\Omega \rho_n c_n dx$ by means of Lemma \ref{lem.aux}. The idea is to obtain ``small'' terms that can be absorbed by the left-hand side of the energy inequality \eqref{2.eta} and terms that can be controlled (the $L^1(\Omega)$ norm of $c_n$). By the H\"older and Gagliardo--Nirenberg inequalities, we may estimate in a different way:
$$
  \int_\Omega \rho_n c_n dx \le\|\rho_n\|_{L^\gamma(\Omega)}
  \|c_n\|_{L^{\gamma/(\gamma-1)}(\Omega)}
  \le C\|\rho_n\|_{L^\gamma(\Omega)}\|c_n\|_{W^{2,\gamma}(\Omega)}^\theta
  \|c_n\|_{L^1(\Omega)}^{1-\theta},
$$
where $\theta=3/(5\gamma-3)\in(0,1)$ (which requires that $\gamma>6/5$).
It follows from the maximal regularity result of Theorem \ref{thm.maxreg} that 
$$
  \int_\Omega \rho_n c_n dx \le C\|\rho_n\|_{L^\gamma(\Omega)}
  (\|\rho_n\|_{L^\gamma(\Omega)}+1)^\theta,
$$
where $C>0$ depends on $\|c^0\|_{L^1(\Omega)}$.
We can conclude if $1+\theta<\gamma$, which is equivalent to $\gamma>8/5$. Thus, even taking into account maximal regularity does not improve the range for $\gamma$.
\end{remark}

\begin{remark}[Improving the condition on $\gamma$]\label{rem.improve}\rm
When the dynamics of the chemical concentration is much faster than that one of the
cell density, we can neglect the time derivative of the concentration in \eqref{1.chem},
and $c_\delta$ solves $0=\Delta c_\delta-c_\delta+\rho_\delta$ in $\Omega$. 
In this situation, we are able to weaken the condition on $\gamma$ to $\gamma>3/2$. 
Indeed, estimate \eqref{2.estc} still holds for the elliptic problem. 
The embedding $W^{1,\gamma}(\Omega)\hookrightarrow L^{3\gamma/(3-\gamma)}(\Omega)$ 
for $\gamma<3$ then shows that
$$
  \|\na c_\delta\|_{L^\infty(0,T;L^{3\gamma/(3-\gamma)}(\Omega))}
	\le C\|c_\delta\|_{L^\infty(0,T;W^{2,\gamma}(\Omega))}
	\le C\|\rho_\delta\|_{L^\infty(0,T;L^\gamma(\Omega))} \le C.
$$
Hence, using H\"older's inequality, we estimate
\begin{align*}
  I_3 &\le \|\na c_\delta\|^2_{L^\infty(0,T;L^{3\gamma/(3-\gamma)}(\Omega))}
	\|\na\phi_B\|_{L^\infty(0,T;L^{3\gamma/(5\gamma-6)}(\Omega))} \\
	&\le C\|\rho_\delta\|_{L^\infty(0,T;L^\gamma(\Omega))}^2 
	\|\rho_\delta^\theta\|_{L^\infty(0,T;L^{3\gamma/(5\gamma-6)}(\Omega))}
	\le C+C\|\rho_\delta\|_{L^\infty(0,T;L^\gamma(\Omega))}^{2+\theta} \le C,
\end{align*}
provided that $0<\theta< (5\gamma-6)/3$.
The terms $I_2$ and $I_4$ are estimated in a similar way and $I_1=0$, thus proving that $I$ is
bounded. This yields a uniform estimate for $\rho_\delta$ in $L^{\gamma+\theta}(\Omega
\times(0,T))$ for $\gamma>3/2$ according to the theory of the compressible Navier--Stokes equations.
\qed\end{remark}

%%%%%%%%%%%%%%%%%%%%%%%%%%%%%%%%%%%%%%%%%%%%%%%%%%%%%%%%%%%%%%%%%%%%%%%%%%%%%%%

\section{Relative energy inequality}\label{sec.relE}

We show a relative energy inequality for smooth functions.

\begin{lemma}[Relative energy inequality]\label{lem.relE}
Let $(\rho,v,c)$ be a smooth solution to \eqref{1.mass}--\eqref{1.bc} and let $(r,u,z)$
be smooth functions satisfying $r>0$ in $\overline\Omega\times[0,T]$ and $u=0$ on $\pa\Omega$. 
Then the relative energy inequality \eqref{1.relEineq} holds for $0<t<T$ with
\begin{align*}
  R(\rho,v,c|r,u,z) &= -\int_\Omega p(\rho|r)\diver u dx 
	- \int_\Omega\psi''(r)(\rho-r)g dx - \int_\Omega h\pa_t(c-z)dx \nonumber \\
	&\phantom{xx}{}- \int_\Omega\na(c-z)\cdot((\rho-r)u)dx
	+ \int_\Omega(c-z)g dx \nonumber \\
	&\phantom{xx}{}
	- \int_\Omega\rho(v-u)\otimes(v-u):\na u dx - \frac{1}{\zeta}\int_\Omega\rho|v-u|^2 dx 
  \nonumber \\
	&\phantom{xx}{}
	- \int_\Omega\bigg(\frac{\rho-r}{r}\big(\mu\Delta u	+ (\lambda+\mu)\na\diver u\big)
	+ \rho f\bigg)\cdot(v-u)dx, \nonumber
\end{align*}
where
\begin{align}
  & f = \pa_t u + u\cdot\na u + \frac{1}{r}\na p(r) - \frac{1}{r}\big(\mu\Delta u
	+ (\lambda+\mu)\na\diver u\big) - \na z + \frac{u}{\zeta}, \label{3.f} \\
	& g = \pa_t r + \diver(ru), \quad h = \pa_t z - \Delta z + z - r. \label{3.gh}
\end{align}
\end{lemma}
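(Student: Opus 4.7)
The plan is to compute $\frac{d}{dt}E(\rho,v,c|r,u,z)$ directly, since for smooth $(\rho,v,c)$ the relative energy inequality reduces to an equality after integration in time. I would first expand
$$
E(\rho,v,c|r,u,z) = E(\rho,v,c) + \mbox{lower-order pieces in }(r,u,z),
$$
so that the diagonal part is controlled by the energy identity derived in Lemma \ref{lem.Hn} (with equality, since the solution is smooth), supplying the dissipation terms $\int(\mu|\na v|^2 + (\lambda+\mu)|\diver v|^2)dx + \int|\pa_t c|^2 dx + \frac{1}{\zeta}\int\rho|v|^2 dx$. The remaining pieces involve $\int\psi(r)dx$, $\int\psi'(r)(\rho-r)dx$, the kinetic cross terms $\int\rho v\cdot u\,dx$ and $\frac12\int\rho|u|^2 dx$, the gradient cross terms coming from $\frac12|\na(c-z)|^2 + \frac12(c-z)^2 - (\rho-r)(c-z)$, and I would differentiate each in $t$ using the system \eqref{1.mass}--\eqref{1.chem} for $(\rho,v,c)$ and the chain rule for $(r,u,z)$. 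The functions $f$, $g$, $h$ defined in \eqref{3.f}--\eqref{3.gh} will appear naturally as the residuals measuring how far $(r,u,z)$ is from solving the system.

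The key sub-computations are four. First, for $\frac{d}{dt}\int\psi(\rho|r)dx$, integration by parts (using $v=0$ on $\pa\Omega$) together with $\na p(\rho)=\rho\na\psi'(\rho)$ and the insertion of $g$ produces the terms $-\int p(\rho|r)\diver u\,dx$ and $-\int\psi''(r)(\rho-r)g\,dx$. Second, for $\frac{d}{dt}\int\frac12\rho|v-u|^2 dx$, I would test the momentum equation with $u$, combine with the mass equation and the chain rule on $\frac12\rho|u|^2$, and then complete the expression so that the quantity multiplying $(v-u)$ becomes $\rho f$ as in \eqref{3.f} plus the term $((\rho-r)/r)(\mu\Delta u + (\lambda+\mu)\na\diver u)$. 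This step produces the enhanced viscous dissipation and $\frac{1}{\zeta}\int\rho|v-u|^2dx$ on the left, and the convective contribution $-\int\rho(v-u)\otimes(v-u):\na u\,dx$ on the right. Third, for $\frac{d}{dt}\int\frac12(|\na(c-z)|^2 + (c-z)^2)dx$, the $c$-equation and the Neumann condition allow an integration by parts; inserting $h$ yields the dissipation $\int|\pa_t(c-z)|^2 dx$ plus $-\int h\,\pa_t(c-z)dx$. Fourth, for $-\frac{d}{dt}\int(\rho-r)(c-z)dx$, I combine the mass and $c$-equations and reassemble using $g$ and $h$; this produces the remaining cross terms $-\int\na(c-z)\cdot((\rho-r)u)dx$ and $\int(c-z)g\,dx$.

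Collecting everything, moving dissipation to the left and identifying the remaining right-hand side with $R(\rho,v,c|r,u,z)$ yields \eqref{1.relEineq} after integration from $0$ to $t$. The main obstacle is bookkeeping, specifically the handling of the chemotaxis coupling: the term $\int\rho\na c\cdot u\,dx$ from testing the momentum equation against $u$ must be split, via the $c$-equation and integration by parts, into a contribution that cancels against the time derivative of $\int\na c\cdot\na z\,dx$ and a genuine residual $-\int\na(c-z)\cdot(\rho-r)u\,dx$. Similarly, the cross terms involving $\int \rho z\,dx$ and $\int r c\,dx$ (coming from $-(\rho-r)(c-z)$) must be arranged so that $(r,u,z)$ appears on the right-hand side \emph{only} through the residuals $f,g,h$ and through the derivatives of $u$; this final regrouping, together with careful tracking of boundary contributions, is the delicate point of the proof.
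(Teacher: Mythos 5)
Your plan is correct: for smooth solutions the identity you describe does hold (so the inequality follows trivially), and the bookkeeping you outline closes. In particular, the extra term $\int_\Omega(\rho-r)\pa_t(c-z)\,dx$ produced by your third sub-computation cancels against its twin from $-\frac{d}{dt}\int_\Omega(\rho-r)(c-z)\,dx$, and the term $\int_\Omega\rho\na(c-z)\cdot(v-u)\,dx$ coming from the momentum test (after absorbing $-\na z$ into $f$) cancels against $-\int_\Omega\rho(v-u)\cdot\na(c-z)\,dx$ arising when you write $\rho v-ru=\rho(v-u)+(\rho-r)u$ in the fourth sub-computation; what survives is exactly $-\int\na(c-z)\cdot((\rho-r)u)\,dx+\int(c-z)g\,dx$, as you claim. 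This is, however, a genuinely different route from the paper: there the inequality is not derived for the smooth solution directly, but at the level of the regularized Faedo--Galerkin system (with artificial viscosity $\eps\Delta\rho$ and artificial pressure $\delta\rho^\beta$), with smooth approximations $(r_m,u_m,z_m)$ of the test triple, and the result is obtained by passing to the limits $(n,m)\to\infty$, $(\delta,\eps)\to 0$; the reformulations of the pressure, chemotaxis and artificial-pressure terms there parallel your four sub-computations but carry the extra $\eps$- and $\delta$-terms. Your direct computation is shorter and proves the lemma as literally stated (indeed with equality), whereas the paper's approximation-level derivation is what makes the same relative energy inequality available for the finite energy weak solutions constructed in Theorem \ref{thm.ex} (the form actually used in the weak--strong uniqueness argument, in the spirit of \cite{FNS11,FJN12}); a purely smooth-solution computation does not by itself deliver that. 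One small caveat, shared with the paper: in your third sub-computation the integration by parts needs $\na(c-z)\cdot\nu=0$ on $\pa\Omega$, i.e.\ the test function $z$ should also satisfy the Neumann condition (as $\bar c$ does in the application), so this hypothesis should be made explicit or the boundary term tracked.
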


We prove in Section \ref{sec.wsu} that the relative energy inequality \eqref{1.relEineq} holds for finite energy weak solutions $(\rho,v,c)$ and $(\bar\rho,\bar{v},\bar{c})$, where $(\bar\rho,\bar{v})$ satisfies \eqref{1.regul}. The proof of \eqref{1.relEineq} follows the lines of \cite[Section 3.2]{FNS11}, but some steps are different due to the additional chemotaxis force. For this reason, and for the convenience of the reader, we present a full proof.

\begin{proof}
Let $(r_m,u_m,z_m)_{m\in\N}$ be smooth functions satisfying $r_m>0$ in 
$\overline\Omega\times[0,T]$, $v_m\in C^1([0,T];$ $X_m)$, and $v_m=0$ on $\pa\Omega$
such that $(r_m,u_m,z_m)\to (r,z,u)$ as $m\to\infty$ in a sense made precise in Step 3 below.
Here, $X_m$ is the Faedo--Galerkin space defined in Section \ref{sec.galerkin}.
We introduce
\begin{align}
  & f_m = \pa_t u_m + u_m\cdot\na u_m + \frac{1}{r_m}\na p(r_m)
	- \frac{1}{r_m}\big(\mu\Delta u_m + (\lambda+\mu)\na\diver u_m\big) \label{fm} \\
	&\phantom{xxxx}{}- \na z_m + \frac{u_m}{\zeta}, \nonumber \\
	& g_m = \pa_t r_m + \diver(r_mu_m), \quad 
	h_m = \pa_t z_m - \Delta z_m + z_m - r_m. \label{gmhm}
\end{align}
Then $(f_m,g_m,h_m)\to(f,g,h)$ as $m\to\infty$ in the sense of
distributions, where $(f,g,h)$ is defined in \eqref{3.f}--\eqref{3.gh}. Finally, 
let $(\rho_n,v_n,c_n)$ be a Galerkin solution to \eqref{2.mass}--\eqref{2.bc}.
We compute in the following the approximate relative energy inequality.

{\em Step 1: Time derivative of the relative kinetic energy.}
We derive an equation for the time evolution of the relative kinetic energy
$\frac12\int_\Omega\rho_n|v_n-u_m|^2 dx$. It follows from the approximative mass 
balance equation \eqref{2.mass} that
\begin{align}
  \frac12\frac{d}{dt}&\big(\rho_n|v_n-u_m|^2\big)
	= -\frac12\big(\diver(\rho_n v_n) - \eps\Delta\rho_n\big)|v_n-u_m|^2
	+ \rho_n\pa_t(v_n-u_m)\cdot(v_n-u_m) \nonumber \\
	&= -\frac12\diver\big(\rho_n v_n|v_n-u_m|^2\big)
	+ \rho_n v_n\cdot\na(v_n-u_m)\cdot(v_n-u_m) \label{aux1} \\
	&\phantom{xx}{}+ \rho_n\pa_t(v_n-u_m)\cdot(v_n-u_m) + \frac{\eps}{2}\Delta\rho_n|v_n-u_m|^2.
	\nonumber
\end{align}
Since $\rho_n(\pa_t v_n+v_n\cdot\na v_n)=\pa_t(\rho_n v_n)+\diver(\rho_n v_n\otimes v_n)
- \eps\Delta\rho_n v_n$, the second and third terms on the right-hand side are written as
\begin{align*}
  \rho_n & v_n\cdot\na(v_n-u_m)\cdot(v_n-u_m) + \rho_n\pa_t(v_n-u_m)\cdot(v_n-u_m) \\
	&= \rho_n(\pa_t v_n+v_n\cdot\na v_n)\cdot(v_n-u_m) 
	- \rho_n(\pa_t u_m + v_n\cdot\na u_m)\cdot(v_n-u_m) \\
	&= \big(\pa_t(\rho_n v_n) + \diver(\rho_n v_n\otimes v_n)\big)\cdot(v_n-u_m)
	- \eps\Delta\rho_n v_n\cdot(v_n-u_m) \\
	&\phantom{xx}{}- \rho_n(\pa_t u_m+u_m\cdot\na u_m)\cdot(v_n-u_m)
	- \rho_n(v_n-u_m)\cdot\na u_m\cdot(v_n-u_m).
\end{align*}
We insert this expression into \eqref{aux1}, integrate over $\Omega$, and replace
$\pa_t(\rho_n v_n)+\diver(\rho_n v_n\otimes v_n)$ by the momentum equation \eqref{2.mom}:
\begin{align}
  \frac12\frac{d}{dt}&\int_\Omega\rho_n|v_n-u_m|^2dx
	= \frac{\eps}{2}\int_\Omega\Delta\rho_n|v_n-u_m|^2 dx
	- \int_\Omega\na(p(\rho_n)+\delta\rho_n^\beta)\cdot(v_n-u_m)dx \nonumber \\
	&\phantom{xx}{}- \int_\Omega\big(\mu\na v_n\cdot\na(v_n-u_m) 
	+ (\lambda+\mu)\diver v_n\diver(v_n-u_m)\big)dx \label{kinetic} \\
	&\phantom{xx}{}- \eps\int_\Omega\na\rho_n\cdot\na v_n\cdot(v_n-u_m)dx
	+ \int_\Omega\rho_n\na c_n\cdot(v_n-u_m)dx \nonumber \\
	&\phantom{xx}{}- \frac{1}{\zeta}\int_\Omega\rho_n v_n\cdot(v_n-u_m)dx
	- \eps\int_\Omega\Delta\rho_n v_n\cdot(v_n-u_m)dx \nonumber \\
	&\phantom{xx}{}- \int_\Omega\rho_n(\pa_t u_m+u_m\cdot\na u_m)\cdot(v_n-u_m)dx
	- \int_\Omega\rho_n(v_n-u_m)\cdot\na u_m\cdot(v_n-u_m)dx. \nonumber
\end{align}
We wish to reformulate the last but one term in the previous equality. For this, 
we add and subtract $r_m$ and replace $r_m(\pa_t u_m+u_m\cdot\na u_m)$ by \eqref{fm}:
\begin{align*}
  -\int_\Omega&\rho_n(\pa_t u_m+u_m\cdot\na u_m)\cdot(v_n-u_m)dx \\
	&= -\int_\Omega\bigg(1 + \frac{\rho_n-r_m}{r_m}\bigg)\big(r_m(\pa_t u_m+ u_m\cdot\na u_m)
	\big)\cdot(v_n-u_m)dx \\
	&= \int_\Omega\bigg(1 + \frac{\rho_n-r_m}{r_m}\bigg)\bigg(\na p(r_m) - r_m\na z_m 
	+ \frac{r_mu_m}{\zeta} - r_mf_m\bigg)\cdot(v_n-u_m)dx \\
	&\phantom{xx}{}- \int_\Omega\bigg(1 + \frac{\rho_n-r_m}{r_m}\bigg)\big(\mu\Delta u_m
	+ (\lambda+\mu)\na\diver u_m\big)\cdot(v_n-u_m)dx.
\end{align*}
Then, after a computation, \eqref{kinetic} becomes
\begin{align}\label{kinetic2}
  \frac12\frac{d}{dt}&\int_\Omega\rho_n|v_n-u_m|^2dx 
	= -\int_\Omega\bigg(\na p(\rho_n) - \frac{\rho_n}{r_m}\na p(r_m)\bigg)\cdot(v_n-u_m)dx \\
	&\phantom{xx}{}- \delta\int_\Omega\na\rho_n^\beta\cdot(v_n-u_m)dx
	+ \eps\int_\Omega\na\rho_n\cdot\na u_m\cdot(v_n-u_m)dx \nonumber \\
	&\phantom{xx}{}- \int_\Omega\big(\mu|\na(v_n-u_m)|^2 + (\lambda+\mu)|\diver(u_n-u_m)|^2
	\big)dx \nonumber \\
	&\phantom{xx}{}- \int_\Omega\rho_n(v_n-u_m)\otimes(v_n-u_m):\na u_m dx
	+  \int_\Omega\rho_n\na(c_n-z_m)\cdot(v_n-u_m)dx \nonumber \\
	&\phantom{xx}{}- \frac{1}{\zeta}\int_\Omega\rho_n|v_n-u_m|^2 dx
	- \int_\Omega\rho_n f_m\cdot(v_n-u_m)dx \nonumber \\
	&\phantom{xx}{}- \int_\Omega\frac{\rho_n-r_m}{r_m}\big(\mu\Delta u_m + (\lambda+\mu)
	\na\diver u_m\big)\cdot(v_n-u_m)dx. \nonumber 
\end{align}
We rewrite the first, second, and sixth terms on the right-hand side of \eqref{kinetic}.

{\em Step 2a: Reformulation of the pressure term.}
Observing that $p'(z)=z\psi''(z)$ for $z\ge 0$ (see \eqref{1.p})
and that $\rho_m-r_m$ satisfies
$$
  \pa_t(\rho_n-r_m) + \diver\big((\rho_n-u_m)u_m + \rho_n(v_n-u_m)\big)
	= \eps\Delta\rho_n - g_m,
$$
we can write the first term on the right-hand side of \eqref{kinetic2} as
\begin{align}
  -\int_\Omega&\bigg(\na p(\rho_n) - \frac{\rho_n}{r_m}\na p(r_m)\bigg)\cdot(v_n-u_m)dx 
	= \int_\Omega\rho_n\na\big(\psi'(\rho_n)-\psi'(r_m)\big)\cdot(v_n-u_m)dx \nonumber \\
	&= -\int_\Omega\big(\psi'(\rho_n)-\psi'(r_m)\big)\diver\big(\rho_n(v_n-u_m)\big)dx 
	\label{potential} \\
	&= -\int_\Omega\big(\psi'(\rho_n)-\psi'(r_m)\big)\pa_t(\rho_n-r_m)dx \nonumber \\
	&\phantom{xx}{}
	- \int_\Omega\big(\psi'(\rho_n)-\psi'(r_m)\big)\diver((\rho_n-r_m)u_m)dx \nonumber \\
	&\phantom{xx}{}+ \int_\Omega\big(\psi'(\rho_n)-\psi'(r_m)\big)(\eps\Delta\rho_n-g_m)dx.
	\nonumber
\end{align}
Taking into account that the evolution of the relative internal energy is given by
\begin{align*}
  \pa_t\psi(\rho_n|r_m) &= \pa_t\big(\psi(\rho_n)-\psi(r_m)-\psi'(r_m)(\rho_n-r_m)\big) \\
	&= \big(\psi'(\rho_n)-\psi'(r_m)\big)\pa_t\rho_n - \psi''(r_m)\pa_t r_m(\rho_n-r_m),
\end{align*}
the first term on the right-hand side of \eqref{potential} is reformulated as
\begin{align*}
  -&\int_\Omega\big(\psi'(\rho_n)-\psi'(r_m)\big)\pa_t(\rho_n-r_m)dx \\
	&= -\int_\Omega\big(\psi'(\rho_n)-\psi'(r_m)\big)\pa_t\rho_n dx 
	+ \int_\Omega\big(\psi'(\rho_n)-\psi'(r_m)\big)\pa_t r_m dx \\
	&= -\int_\Omega\bigg(\frac{d}{dt}\psi(\rho_n|r_m) + \psi''(r_m)\pa_t r_m(\rho_n-r_m)\bigg)dx 
	+ \int_\Omega\big(\psi'(\rho_n)-\psi'(r_m)\big)\pa_t r_m dx \\
	&= -\frac{d}{dt}\int_\Omega\psi(\rho_n|r_m)dx
	+ \int_\Omega\big(\psi'(\rho_n)-\psi'(r_m)-\psi''(r_m)(\rho_n-r_m)\big)
	(g_m-\diver(r_mu_m))dx,
\end{align*}
where we used definition \eqref{gmhm} of $g_m$ in the last step. 
Integrating by parts to get rid of the
divergence, inserting the corresponding expression into \eqref{potential}, and observing
that the integral over $(\psi'(\rho_n)-\psi'(r_m))g_m$ cancels with the
corresponding expression in \eqref{potential}, we find that
\begin{align*}
  -\int_\Omega&\bigg(\na p(\rho_n) - \frac{\rho_n}{r_m}\na p(r_m)\bigg)\cdot(v_n-u_m)dx 
	= -\frac{d}{dt}\int_\Omega\psi(\rho_n|r_m)dx \\
	&{}+ \int_\Omega\big\{\na\big(\psi'(\rho_n)-\psi'(r_m)\big)\cdot(\rho_n u_m)
	- \na\big(\psi''(r_m)(\rho_n-r_m)\big)\cdot(r_mu_m)\big\}dx \\
	&{}+ \eps\int_\Omega\big(\psi'(\rho_n)-\psi'(r_m)\big)\Delta\rho_n dx
	- \int_\Omega\psi''(r_m)(\rho_n-r_m)g_m dx.
\end{align*}
We claim that the second term on the right-hand side can be formulated in terms of
the relative pressure $p(\rho_n|r_m) = p(\rho_n)-p(r_m)-p'(r_m)(\rho_n-r_m)$. It follows
from \eqref{1.p} that $\na p(\rho_n)=\rho_n\na\psi'(\rho_n)$, 
$\na p'(r_m) = \na\psi'(r_m)+r_m\na\psi''(r_m)$ and hence,
\begin{align*}
  \na p(\rho_n|r_m) &= \rho_n\na\psi'(\rho_n) - r_m\na\psi'(r_m)
	- \na\big(r_m\psi''(r_m)(\rho_n-r_m)\big) \\
	&= \rho_n\na\big(\psi'(\rho_n)-\psi'(r_m)\big) - r_m\na\big(\psi''(r_m)(\rho_n-r_m)\big)
\end{align*}
and consequently,
\begin{align*}
  \int_\Omega&\big\{\na\big(\psi'(\rho_n)-\psi'(r_m)\big)\cdot(\rho_n u_m)
	- \na\big(\psi''(r_m)(\rho_n-r_m)\big)\big\}\cdot(r_mu_m)dx \\
	&= \int_\Omega\na p(\rho_n|r_m)\cdot u_m dx = -\int_\Omega p(\rho_n|r_m)\diver u_m dx.
\end{align*}
Therefore, 
\begin{align}\label{2a}
  -\int_\Omega&\bigg(\na p(\rho_n) - \frac{\rho_n}{r_m}\na p(r_m)\bigg)\cdot(v_n-u_m)dx 
	= -\frac{d}{dt}\int_\Omega\psi(\rho_n|r_m)dx \\
	&{} - \int_\Omega p(\rho_n|r_m)\diver u_m dx 
	+ \eps\int_\Omega\big(\psi'(\rho_n)-\psi'(r_m)\big)\Delta\rho_n dx \nonumber \\
	&{}- \int_\Omega\psi''(r_m)(\rho_n-r_m)g_m dx. \nonumber
\end{align}

{\em Step 2b: Reformulation of the chemotaxis term.}
We reformulate the sixth term on the right-hand side of \eqref{kinetic2} by
integrating by parts and using the mass balances \eqref{2.mass} and \eqref{gmhm}:
\begin{align}\label{chemo}
  \int_\Omega&\rho_n\na(c_n-z_m)\cdot(v_n-u_m)dx 
	= -\int_\Omega(c_n-z_m)\diver\big(\rho_n(v_n-u_m)\big)dx \\
	&= \int_\Omega(c_n-z_m)\diver\big(-\rho_nv_n + (\rho_n-r_m)u_m + r_mu_m\big)dx \nonumber \\
	&= \int_\Omega(c_n-z_m)\big(\pa_t(\rho_n-r_m) + \diver((\rho_n-r_m)u_m) 
	- \eps\Delta\rho_n + g_m\big)dx \nonumber \\
	&= \frac{d}{dt}\int_\Omega(c_n-z_m)(\rho_n-r_m)dx - \int_\Omega(\rho_n-r_m)\pa_t(c_n-z_m)dx 
	\nonumber \\
	&\phantom{xx}{}- \int_\Omega\na(c_n-z_m)\cdot((\rho_n-r_m)u_m)dx
	- \int_\Omega(c_n-z_m)(\eps\Delta\rho_n-g_m) dx. \nonumber 
\end{align}
In view of the second equation in \eqref{gmhm}, we have
$$
  \rho_n-r_m = \pa_t(c_n-z_m) - \Delta(c_n-z_m) + (c_n-z_m) + h_m.
$$
We insert this expression into the second term on the right-hand side of \eqref{chemo}:
\begin{align}\label{2b}
  \int_\Omega&\rho_n\na(c_n-z_m)\cdot(v_n-u_m)dx 
	= \frac{d}{dt}\int_\Omega(c_n-z_m)(\rho_n-r_m)dx 
	- \int_\Omega|\pa_t(c_n-z_m)|^2 dx \\
	&{}- \frac12\frac{d}{dt}\int_\Omega\bigg(|\na(c_n-z_m)|^2 + (c_n-z_m)^2\bigg)dx
	- \int_\Omega h_m\pa_t(c_n-z_m)dx \nonumber \\
	&{}- \int_\Omega\na(c_n-z_m)\cdot((\rho_n-r_m)u_m)dx
	- \int_\Omega(c_n-z_m)(\eps\Delta\rho_n-g_m) dx. \nonumber
\end{align}

{\em Step 2c: Reformulation of the artificial pressure term.}
We rewrite the second term on the right-hand side of \eqref{kinetic2} by
integrating by parts and using the mass balance equation \eqref{2.mass}:
\begin{align}\label{2c}
  -\delta\int_\Omega&\na\rho_n^\beta\cdot(v_n-u_m)dx
	= -\beta\delta\int_\Omega\rho_n^{\beta-2}\na\rho_n\cdot(\rho_n v_n)dx
	- \delta\int_\Omega\rho_n^\beta\diver u_m dx \\
	&= \frac{\beta\delta}{\beta-1}\int_\Omega\rho_n^{\beta-1}\diver(\rho_n v_n)dx
	- \delta\int_\Omega\rho_n^\beta\diver u_m dx \nonumber \\
	&= \frac{\beta\delta}{\beta-1}\int_\Omega\rho_n^{\beta-1}(\eps\Delta\rho_n-\pa_t\rho_n)dx
	- \delta\int_\Omega\rho_n^\beta\diver u_m dx \nonumber \\
	&= -\frac{\delta}{\beta-1}\frac{d}{dt}\int_\Omega\rho_n^\beta dx
	- \beta\delta\int_\Omega\rho_n^{\beta-2}|\na\rho_n|^2 dx
	- \delta\int_\Omega\rho_n^\beta\diver u_m dx. \nonumber 
\end{align}

{\em Step 2d: Collecting the reformulations.}
We include the reformulations \eqref{2a}, \eqref{2b}, and \eqref{2c} into 
\eqref{kinetic2} to find that
\begin{align}\label{step2d}
  \frac{d}{dt}&\int_\Omega\bigg\{\frac12\rho_n|v_n-u_m|^2 + \psi(\rho_n|r_m)
	+ \frac12\bigg(|\na(c_n-z_m)|^2 + (c_n-z_m)^2\bigg) \\
	&\phantom{xx}{}- (c_n-z_m)(\rho_n-r_m)
	+ \frac{\delta}{\beta-1}\rho_n^\beta\bigg\}dx
	+ \int_\Omega|\pa_t(c_n-z_m)|^2dx \nonumber \\
	&\phantom{xx}{}+ \beta\delta\int_\Omega\rho_n^{\beta-2}|\na\rho_n|^2 dx 
	+ \int_\Omega\big(\mu|\na(v_n-u_m)|^2 
	+ (\lambda+\mu)|\diver(v_n-u_m)|^2\big)dx \nonumber \\
	&= -\int_\Omega p(\rho_n|r_m)\diver u_m dx 
	+ \eps\int_\Omega\big(\psi'(\rho_n)-\psi'(r_m)\big)\Delta\rho_n dx \nonumber \\
	&\phantom{xx}{}- \int_\Omega\psi''(r_m)(\rho_n-r_m)g_m dx - \int_\Omega h_m\pa_t(c_n-z_m)dx
	- \delta\int_\Omega\rho_n^\beta\diver u_m dx \nonumber \\
	&\phantom{xx}{}- \int_\Omega\na(c_n-z_m)\cdot((\rho_n-r_m)u_m)dx
	- \int_\Omega(c_n-z_m)(\eps\Delta\rho_n-g_m) dx \nonumber \\
	&\phantom{xx}{}+ \eps\int_\Omega\na\rho_n\cdot\na u_m\cdot(v_n-u_m)dx
	- \int_\Omega\rho_n(v_n-u_m)\otimes(v_n-u_m):\na u_m dx \nonumber \\
	&\phantom{xx}{}- \frac{1}{\zeta}\int_\Omega\rho_n|v_n-u_m|^2 dx 
	- \int_\Omega\rho_nf_m\cdot(v_n-u_m)dx \nonumber \\
	&\phantom{xx}{}- \int_\Omega\frac{\rho_n-r_m}{r_m}\big(\mu\Delta u_m 
	+ (\lambda+\mu)\na\diver u_m\big)\cdot(v_n-u_m)dx. \nonumber 
\end{align}

{\em Step 3: Limit $(n,m)\to\infty$ and $(\delta,\eps)\to 0$.} 
As mentioned in \cite[Section 3.3]{FNS11}, the limit in the approximate relative energy 
inequality \eqref{step2d} follows step by step the existence proof in \cite[Chapter 7]{Fei04} 
or \cite[Chapter 7]{NoSt04}. In particular, we perform first the limit $n\to\infty$ in the
Faedo--Galerkin approximation $(\rho_n,v_n,c_n)\to (\rho_{\eps,\delta},v_{\eps,\delta},
c_{\eps,\delta})$. Then the functions $(r_m,u_m,z_m)$ are replaced by smooth functions
$(r,u,z)$ using a density argument. Third, we pass to the limit 
$(\rho_{\eps,\delta},v_{\eps,\delta},c_{\eps,\delta})\to(\rho_\delta,v_\delta,c_\delta)$
as $\eps\to 0$ and $(\rho_\delta,v_\delta,c_\delta)\to(\rho,v,c)$ as $\delta\to 0$.

In view of the bounds
\eqref{2.bounds}, we can pass to the limit $n\to\infty$ and $(\delta,\eps)\to 0$
in \eqref{step2d}. We assume that $(r_m,u_m,z_m)$ converges to $(r,u,z)$ 
as $m\to\infty$ in such a way that the limit $m\to\infty$ in \eqref{step2d} is possible.
Then some integrals in \eqref{step2d} disappear and we end up with
\begin{align*}%\label{step3}
  \frac{d}{dt}&\int_\Omega\bigg(\psi(\rho|r) + \frac12\rho|v-u|^2
	+ \frac12\big(|\na(c-z)|^2 + (c-z)^2\big) - (\rho-r)(c-z)\bigg)dx \\
	&\phantom{xx}{} + \int_\Omega\big(\mu|\na(v-u)|^2 
	+ (\lambda+\mu)|\diver(v-u)|^2\big)dx + \int_\Omega|\pa_t(c-z)|^2dx \nonumber \\
	&= -\int_\Omega p(\rho|r)\diver u dx 
	- \int_\Omega\psi''(r)(\rho-r)g dx - \int_\Omega h\pa_t(c-z)dx \nonumber \\
	&\phantom{xx}{}- \int_\Omega\na(c-z)\cdot((\rho-r)u)dx
	+ \int_\Omega(c-z)g dx \nonumber \\
	&\phantom{xx}{}
	- \int_\Omega\rho(v-u)\otimes(v-u):\na u dx - \frac{1}{\zeta}\int_\Omega\rho|v-u|^2 dx 
  \nonumber \\
	&\phantom{xx}{}
	- \int_\Omega\bigg(\frac{\rho-r}{r}\big(\mu\Delta u	+ (\lambda+\mu)\na\diver u\big)
	+ \rho f\bigg)\cdot(v-u)dx. \nonumber
\end{align*}
This shows \eqref{1.relEineq} and finishes the proof.
\end{proof}

%%%%%%%%%%%%%%%%%%%%%%%%%%%%%%%%%%%%%%%%%%%%%%%%%%%%%%%%%%%%%%%%%%%%%%%%%%%%%%

\section{Weak--strong uniqueness}\label{sec.wsu}

We split the proof in several steps.

{\em Step 1: Relative energy inequality.}
We claim that \eqref{1.relEineq} holds for finite energy weak solutions 
$(\rho,v,c)$ and $(\bar\rho,\bar v,\bar c)$, where $(\bar\rho,\bar v)$ satisfies 
the regularity \eqref{1.regul}. 
According to \cite[Section 4]{FNS11}, using a density argument,
the relative energy inequality \eqref{1.relEineq} still holds for
functions $(r,u)$ satisfying the following regularity conditions:
\begin{equation}\label{3.regul1}
\begin{aligned}
  & r\in C_{\rm weak}^0([0,T];L^\gamma(\Omega)), \quad
	u\in C^0_{\rm weak}([0,T];L^{2\gamma/(\gamma-1)}(\Omega;\R^3)), \\
	& |\na u|\in L^1(0,T;L^\infty(\Omega))\cap L^2(\Omega\times(0,T)), \quad
	u=0\mbox{ on }\pa\Omega, \\
	& \pa_t u\in L^1(0,T;L^{2\gamma/(\gamma-1)}(\Omega;\R^3))\cap
	L^2(0,T;L^{6\gamma/(5\gamma-6)}(\Omega;\R^3)), \\
	& |\na^2 u|\in L^1(0,T;L^{2\gamma/(2\gamma+1)}(\Omega))\cap
	L^2(0,T;L^{6/5}(\Omega)).
\end{aligned}
\end{equation}
Moreover, $r$ needs to be bounded away from zero and we require
$\na\psi'(r),\pa_t\psi'(r)\in L^1(0,T;$ $L^{2\gamma/(\gamma-1)}(\Omega))$.
An inspection of \eqref{1.relEineq} reveals that $z$ should satisfy
\begin{equation}\label{3.regul2}
\begin{aligned}
  & z\in C_{\rm weak}^0([0,T];H^1(\Omega))\cap H^1(0,T;L^2(\Omega)),\quad
	\Delta z\in L^2(0,T;L^2(\Omega)), \\
	& |\na z|\in L^1(0,T;L^{2\gamma/(2\gamma-1)}(\Omega))\cap
	L^2(0,T;L^{6\gamma/(5\gamma-6)}(\Omega)).
\end{aligned}
\end{equation}
It follows from \cite[Theorem 2.4]{FJN12} that \eqref{1.relEineq} still holds if
$(\rho,v,c)$ is a finite energy weak solution.

\begin{lemma}
Let $(\bar\rho,\bar v,\bar c)$ be a finite energy weak solution in the sense of
Definition \ref{def.weak} satisfying the additional regularity \eqref{1.regul}.
Furthermore, let $\bar{c}^0\in W^{2-2/\gamma,\gamma}(\Omega)$
and $\bar{c}^0\ge 0$ in $\Omega$.
Then $(\bar\rho,\bar v,\bar c)$ fulfills the regularity conditions
\eqref{3.regul1}--\eqref{3.regul2}.
\end{lemma}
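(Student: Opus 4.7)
The plan is to verify each regularity bound in \eqref{3.regul1}--\eqref{3.regul2} by cross-referencing the baseline regularity from Definition \ref{def.weak}, the enhanced bounds \eqref{1.regul}, and maximal parabolic regularity for \eqref{1.chem} via Theorem \ref{thm.maxreg}. I would organize the argument in three blocks: first the chemoattractant $\bar c$, then the spatial regularity of $(\bar\rho,\bar v)$, and finally the time derivatives $\partial_t\bar v$ and $\partial_t\psi'(\bar\rho)$.

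For \eqref{3.regul2}, the memberships $\bar c\in C^0_{\rm weak}([0,T];H^1(\Omega))\cap H^1(0,T;L^2(\Omega))$ are already part of Definition \ref{def.weak}. Since \eqref{1.regul} forces $\bar\rho\in L^\infty(\Omega\times(0,T))$, Theorem \ref{thm.maxreg} applied to $\partial_t\bar c-\Delta\bar c+\bar c=\bar\rho$ with datum $\bar c^0\in W^{2-2/\gamma,\gamma}_\nu(\Omega)$ yields $\bar c\in L^p(0,T;W^{2,q'}(\Omega))$ and $\partial_t\bar c\in L^p(0,T;L^{q'}(\Omega))$ for any finite $p,q'\in(1,\infty)$. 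This immediately gives $\Delta\bar c\in L^2(0,T;L^2(\Omega))$ and, through the embedding $W^{2,q'}\hookrightarrow W^{1,\infty}$ for $q'>3$, the two spatial bounds on $|\nabla\bar c|$ required in \eqref{3.regul2}.

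For the spatial part of \eqref{3.regul1}, the condition $\bar v=0$ on $\partial\Omega$ and the $L^2(\Omega\times(0,T))$ bound on $|\nabla\bar v|$ are provided by Definition \ref{def.weak}; weak continuity of the momentum density then yields $\bar v\in C^0_{\rm weak}([0,T];L^{2\gamma/(\gamma-1)}(\Omega))$. Combining $\bar v\in L^2(0,T;H^1_0)$ with the bound $|\nabla^2\bar v|\in L^2(0,T;L^q)$ from \eqref{1.regul} gives $\bar v\in L^2(0,T;W^{2,q})$, from which $|\nabla\bar v|\in L^2(0,T;L^\infty)\subset L^1(0,T;L^\infty)$ follows by the Sobolev embedding $W^{2,q}\hookrightarrow W^{1,\infty}$ for $q>3$. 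The two bounds on $|\nabla^2\bar v|$ reduce to elementary Lebesgue inclusions on the bounded domain. For $\bar\rho$, the pointwise bound $c_p\le\bar\rho\le C_p$ together with $|\nabla\bar\rho|\in L^2(0,T;L^q)$ yields $\nabla\psi'(\bar\rho)=\gamma\bar\rho^{\gamma-2}\nabla\bar\rho\in L^2(0,T;L^q)$, while the renormalized continuity equation produces $\bar\rho\in C^0_{\rm weak}([0,T];L^\gamma(\Omega))$.

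The step I expect to be most delicate is the control of $\partial_t\bar v$ and $\partial_t\psi'(\bar\rho)$. Since $\bar\rho\ge c_p>0$, the momentum equation can be rearranged as
\[
  \partial_t\bar v=-\bar v\cdot\nabla\bar v-\nabla\psi'(\bar\rho)+\frac{1}{\bar\rho}\bigl(\mu\Delta\bar v+(\lambda+\mu)\nabla\diver\bar v\bigr)+\nabla\bar c-\frac{\bar v}{\zeta},
\]
and each right-hand term inherits the regularity already established: $\bar v\cdot\nabla\bar v\in L^2(0,T;L^\infty)$ from $\bar v\in L^\infty$ and $\nabla\bar v\in L^2(0,T;L^\infty)$; the viscous block lies in $L^2(0,T;L^q)$; $\nabla\psi'(\bar\rho)\in L^2(0,T;L^q)$; $\nabla\bar c$ is arbitrarily regular by maximal regularity; and $\bar v/\zeta\in L^\infty$. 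The analogous estimate for $\partial_t\psi'(\bar\rho)=\gamma\bar\rho^{\gamma-2}\partial_t\bar\rho$ uses the mass equation $\partial_t\bar\rho=-\bar v\cdot\nabla\bar\rho-\bar\rho\,\diver\bar v\in L^2(0,T;L^q)$. The main obstacle is bookkeeping: one must verify that these $L^2_tL^q_x$-type bounds embed into the target spaces $L^1(0,T;L^{2\gamma/(\gamma-1)})\cap L^2(0,T;L^{6\gamma/(5\gamma-6)})$ of \eqref{3.regul1} for every $\gamma>8/5$. Since $\Omega$ is bounded, this reduces to the Lebesgue inclusion $L^q(\Omega)\hookrightarrow L^{\max\{2\gamma/(\gamma-1),\,6\gamma/(5\gamma-6)\}}(\Omega)$, which holds upon choosing $q>3$ sufficiently large, as permitted by \eqref{1.regul}.
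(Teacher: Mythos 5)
Your overall route is the same as the paper's (the paper disposes of \eqref{3.regul1} by citing \cite[Section 4]{FNS11} and Sobolev embeddings, and of \eqref{3.regul2} by Theorem \ref{thm.maxreg}), but two of your steps are not justified as written. First, you claim that maximal regularity gives $\bar c\in L^p(0,T;W^{2,q'}(\Omega))$ and $\pa_t\bar c\in L^p(0,T;L^{q'}(\Omega))$ ``for any finite $p,q'$'' and in particular $\na\bar c\in W^{1,\infty}$. Theorem \ref{thm.maxreg} requires the initial datum to lie in $W^{2-2/p,q'}_\nu(\Omega)$ for the \emph{same} exponents, and you only have $\bar c^0\in W^{2-2/\gamma,\gamma}(\Omega)$ (plus $\bar c^0\in H^1(\Omega)$ from the energy class), so large $q'$ is not admissible and the $W^{1,\infty}$ control of $\na\bar c$ does not follow. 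This overreach is harmless only because \eqref{3.regul2} needs much less: with $(p,q)=(2,2)$ (using $\bar c^0\in H^1$, and $\bar\rho\in L^\infty$ from \eqref{1.regul}) one gets $\Delta\bar c\in L^2(0,T;L^2)$ and $\na\bar c\in L^2(0,T;L^6)$, which suffices since $2\gamma/(2\gamma-1)\le 2$ and $6\gamma/(5\gamma-6)\le 6$ for $\gamma\ge 3/2$; you should argue this way rather than via arbitrary exponents.

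The more serious issue is your last reduction, where you verify $\pa_t\bar v,\ \na\psi'(\bar\rho),\ \pa_t\psi'(\bar\rho)\in L^1(0,T;L^{2\gamma/(\gamma-1)})\cap L^2(0,T;L^{6\gamma/(5\gamma-6)})$ by invoking $L^q(\Omega)\hookrightarrow L^{2\gamma/(\gamma-1)}(\Omega)$ ``upon choosing $q>3$ sufficiently large, as permitted by \eqref{1.regul}.'' Assumption \eqref{1.regul} provides $|\na\bar\rho|,|\na^2\bar v|\in L^2(0,T;L^q)$ for \emph{one} fixed $q>3$; you are not free to enlarge it. For $8/5<\gamma<3$ the target exponent $2\gamma/(\gamma-1)$ exceeds $3$ (e.g.\ it is about $5.33$ near $\gamma=8/5$), so for a given $q$ with $3<q<2\gamma/(\gamma-1)$ the inclusion you need fails, and the only terms carrying this exponent ($\bar\rho^{\gamma-2}\na\bar\rho$, the viscous block, $\bar\rho^{\gamma-2}\pa_t\bar\rho$) have no better integrability available from your argument. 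This is exactly the bookkeeping that the paper delegates to \cite[Section 4]{FNS11}; as written, your proof has a genuine gap here (it silently requires $q\ge 2\gamma/(\gamma-1)$, which is automatic only for $\gamma\ge 3$), and you would either have to impose that condition on $q$ or reproduce the argument of \cite{FNS11} rather than a plain Lebesgue inclusion.
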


\begin{proof}
Regularity \eqref{3.regul1} follows as in \cite[Section 4]{FNS11} from Sobolev embeddings.
Theorem \ref{thm.maxreg} in the Appendix shows that \eqref{3.regul2} is satisfied.
\end{proof}

The previous lemma shows that we can take $(r,u,z)=(\bar\rho,\bar{v},\bar{c})$
in \eqref{1.relEineq}. Then the remainder $R(\rho,v,c|\bar\rho,\bar{v},\bar{c})$ in Lemma \ref{lem.relE} simplies, since $f=0$ and $g=h=0$, and we find that
\begin{align}\label{3.R}
  & \int_0^t R(\rho,v,c|\bar\rho,\bar{v},\bar{c})ds = J_1+\cdots+ J_5, \quad\mbox{where} \\
	& J_1 = -\int_0^t\int_\Omega p(\rho|\bar\rho)\diver \bar{v}dxds, \nonumber \\
	& J_2 = -\int_0^t\int_\Omega\na(c-\bar{c})\cdot((\rho-\bar\rho)\bar{v})dxds, \nonumber \\
	& J_3 = -\int_0^t\int_\Omega\rho(v-\bar v)\otimes(v-\bar v):\na\bar v dxds, \nonumber \\
	& J_4 = -\frac{1}{\zeta}\int_0^t\int_\Omega\bar\rho|v-\bar v|^2 dxds, \nonumber \\
	& J_5 = -\int_0^t\int_\Omega\frac{\rho-\bar\rho}{\bar\rho}\big(\mu\Delta\bar v
	+ (\lambda+\mu)\na\diver\bar v\big)\cdot(v-\bar v)dxds. \nonumber 
\end{align} 

{\em Step 2: Estimation of $J_i$.}
The terms $J_i$ can be estimated as in \cite[Section 4.1]{FNS11}
except the new term $J_2$. Indeed, since $p(\rho)=(\gamma-1)\psi(\rho)$, 
we have $p(\rho|\bar\rho)=(\gamma-1)\psi(\rho|\bar\rho)$, showing that
$$
  J_1 \le C\int_0^t\int_\Omega\psi(\rho|\bar\rho)dxds,
$$
and H\"older's inequality gives
$$
  J_3 \le C\int_0^t\int_\Omega\rho|v-\bar v|^2 dxds,
$$
where $C>0$ depends on the $L^\infty(\Omega\times(0,T))$ norm of $\na\bar v$.
The term $J_4$ is nonpositive and can be neglected.
Formulas (4.13)--(4.14) in \cite{FNS11} lead to
$$
  J_5 \le \xi\int_0^t\|v-\bar v\|_{H^1(\Omega)}^2 ds	
	+ C(\xi)\int_0^t\int_\Omega\psi(\rho|\bar\rho)dxds,
$$
where $\xi>0$ is arbitrary and $C(\xi)>0$ depends on $\xi$ as well as
$\|\bar v\|_{L^\infty(0,t;W^{2,3}(\Omega))}$ 
and $\|\na^2\bar v\|_{L^\infty(0,t;L^q(\Omega))}$. At this point, we need the condition $q>3$.

To estimate the term $J_2$, which is not contained in \cite{FNS11}, we use 
equation \eqref{1.chem} for $c$ and integrate by parts:
\begin{align*}
  J_2 &= -\int_0^t\int_\Omega\na(c-\bar c)\cdot\bar v\big(\pa_t(c-\bar c) - \Delta(c-\bar c)
	+ (c-\bar c)\big)dx \\
	&= -\int_0^t\int_\Omega\pa_t(c-\bar c)\na(c-\bar c)\cdot\bar v dxds
	- \frac12\int_0^t\int_\Omega\na[(c-\bar c)^2]\cdot\bar v dxds \\
	&\phantom{xx}{}+ \int_0^t\int_\Omega\bigg(\diver\big(\na(c-\bar c)\otimes\na(c-\bar c)\big)
	- \frac12\na|\na(c-\bar c)|^2\bigg)\cdot\bar vdxds \\
	&= -\int_0^t\int_\Omega\pa_t(c-\bar c)\na(c-\bar c)\cdot\bar v dxds
	+ \frac12\int_0^t\int_\Omega(c-\bar c)^2\diver\bar v dxds \\
	&\phantom{xx}{}- \int_0^t\int_\Omega\bigg(\na(c-\bar c)\otimes(c-\bar c):\na\bar v
	- \frac12|\na(c-\bar c)|^2\diver\bar v\bigg)dxds.
\end{align*}
Then, by Young's inequality,
$$
  J_2 \le \frac12\int_0^t\int_\Omega|\pa_s(c-\bar c)|^2 dxds
	+ C\int_0^t\int_\Omega\big(|\na(c-\bar c)|^2 + (c-\bar c)^2\big)dxds,
$$
where $C>0$ depends on $\|\bar v\|_{L^\infty(0,T;W^{1,\infty}(\Omega))}$. 
Summarizing, it follows from \eqref{3.R} that
\begin{align*}
  \int_0^t & R(\rho,v,c|\bar\rho,\bar v,\bar c)ds
	\le \frac12\int_0^t\int_\Omega|\pa_s(c-\bar c)|^2 dxds 
	+ \xi\int_0^t\|v-\bar v\|_{H^1(\Omega)}^2 ds \\
	&{}+ C\int_0^t\int_\Omega\big(\psi(\rho|\bar\rho) + \rho|v-\bar v|^2 
	+ |\na(c-\bar c)|^2 + (c-\bar c)^2\big)dxds.
\end{align*}
The first term on the right-hand side can be absorbed by the last term on the
left-hand side of \eqref{1.relEineq}. The second term on the left-hand side of \eqref{1.relEineq}
can be bounded from below by Korn's inequality \cite[Lemma 2]{NoPo11} according to 
$$
  \int_\Omega\big(\mu|\na(v-\bar v)|^2 + (\lambda+\mu)|\diver(v-\bar v)|^2\big)dx
	\ge C_K\|v-\bar v\|_{H^1(\Omega)}^2,
$$
since $v=\bar v=0$ on $\pa\Omega$. Therefore, choosing $0<\xi<C_K$, \eqref{1.relEineq} yields
\begin{align}
  E((&\rho,v,c)(t)|(\bar\rho,\bar v,\bar c)(t))
	+ \frac12\int_0^t\int_\Omega|\pa_s(c-\bar c)|^2 dxds
	+ (C_K-\xi)\int_0^t\|v-\bar v\|_{H^1(\Omega)}^2 ds \nonumber \\
	&\le E(\rho_0,v_0,c_0|\bar\rho_0,\bar v_0,\bar c_0) \label{3.EH} \\
	&\phantom{xx}{}+ C\int_0^t\int_\Omega\big(\psi(\rho|\bar\rho) + \rho|v-\bar v|^2 
	+ |\na(c-\bar c)|^2 + (c-\bar c)^2\big)dxds \nonumber \\
	&\le E(\rho_0,v_0,c_0|\bar\rho_0,\bar v_0,\bar c_0)
	+ C\int_0^t H(\rho,v,c|\bar\rho,\bar{v},\bar{c})ds. \nonumber
\end{align}

{\em Step 3: Estimation of $\int_\Omega(\rho-\bar\rho)(c-\bar c)dx$.}
We use Lemma \ref{lem.aux} in Appendix \ref{sec.aux}
with $m=2$ and arbitrary $\kappa_1,\xi>0$ on the set $\{\rho\le R\}$ for some $R>0$:
\begin{align}\label{3.R1}
  \int_{\{\rho\le R\}}(\rho-\bar\rho)(c-\bar c)dx 
	&\le \kappa_1\|\rho-\bar\rho\|_{L^2(\Omega\cap\{\rho\le R\})}^2
	+ \xi\|\na(c-\bar c)\|_{L^2(\Omega)}^2 \\
	&\phantom{xx}{}+ C_1(\kappa_1,\xi)\|c-\bar c\|_{L^1(\Omega)}^{C_2(2)}, \nonumber
\end{align}
as well as with $m=\gamma$ (which requires $\gamma>8/5$)
and arbitrary $\kappa_2>0$ on the set $\{\rho>R\}$:
\begin{align}\label{3.R2}
  \int_{\{\rho>R\}}(\rho-\bar\rho)(c-\bar c)dx 
	&\le \kappa_2\|\rho-\bar\rho\|_{L^\gamma(\Omega\cap\{\rho>R\})}^\gamma
	+ \xi\|\na(c-\bar c)\|_{L^2(\Omega)}^2 \\
	&\phantom{xx}{}+ C_1(\kappa_2,\xi)\|c-\bar c\|_{L^1(\Omega)}^{C_2(\gamma)}. \nonumber
\end{align}
According to \cite[Lemma 2.4]{LaTz13}, there exist constants $C_3,C_4,c_p,C_p>0$ such that
\begin{equation*}%\label{3.relpsi}
  \psi(\rho|\bar\rho) \ge \begin{cases}
	C_3|\rho-\bar\rho|^2 &\quad\mbox{if }0\le\rho\le R, \\
	C_4|\rho-\bar\rho|^\gamma &\quad\mbox{if }\rho>R,
	\end{cases}
\end{equation*}
as long as $c_p\le\bar\rho\le C_p$. Thus, we can replace the first term on the right-hand
sides of \eqref{3.R1} and \eqref{3.R2}, respectively, by 
$\kappa_1 C_3^{-1}\int_\Omega\psi(\rho|\bar\rho)dx$ and 
$\kappa_2 C_4^{-1} \int_\Omega\psi(\rho|\bar\rho)dx$,
and summing these inequalities, we obtain
\begin{align}\label{3.aux}
  \int_\Omega(\rho-\bar\rho)(c-\bar c)dx 
	&\le \bigg(\frac{\kappa_1}{C_3}+\frac{\kappa_2}{C_4}\bigg)\int_\Omega\psi(\rho|\bar\rho)dx
	+ 2\xi\|\na(c-\bar c)\|_{L^2(\Omega)}^2 \\
	&\phantom{xx}{}+ C_1(\kappa_1,\xi)\|c-\bar c\|_{L^1(\Omega)}^{C_2(2)}
	+ C_1(\kappa_2,\xi)\|c-\bar c\|_{L^1(\Omega)}^{C_2(\gamma)}. \nonumber
\end{align}

We wish to estimate the last two norms in terms of the initial data. 
To this end, we integrate \eqref{1.chem} and use the mass conservation 
$\|\rho(t)\|_{L^1(\Omega)}=\|\rho^0\|_{L^1(\Omega)}$ and
$\|\bar\rho(t)\|_{L^1(\Omega)}=\|\bar\rho^0\|_{L^1(\Omega)}$:
\begin{align*}
  \frac{d}{dt}\int_\Omega(c-\bar c)(t)dx 
	&= -\int_\Omega(c-\bar c)dx + \int_\Omega(\rho-\bar\rho)dx
	= -\int_\Omega(c-\bar c)dx + \int_\Omega(\rho^0-\bar\rho^0)dx.
\end{align*}
Gronwall's lemma yields
$$
  \int_\Omega(c-\bar c)(t)dx \le C\int_\Omega(c^0-\bar c^0)dx
	+ C\int_\Omega(\rho^0-\bar\rho^0)dx.
$$
The same argument with $\bar c-c$ then shows that
$$
  \|(c-\bar c)(t)\|_{L^1(\Omega)} \le C\big(\|c^0-\bar c^0\|_{L^1(\Omega)}
	+ \|\rho^0-\bar\rho^0\|_{L^1(\Omega)}\big).
$$
Hence, choosing $\kappa_1=C_3/4$, $\kappa_2=C_4/4$, and $\xi=1/8$, we deduce from
\eqref{3.aux} that
\begin{align*}
  \int_\Omega(\rho-\bar\rho)(c-\bar c)dx 
	&\le \frac12\int_\Omega\bigg(\psi(\rho|\bar\rho) + \frac12|\na(c-\bar c)|^2\bigg)dx \\
	&\phantom{xx}{}+ C_1(\kappa_1,\xi)\big(\|c^0-\bar c^0\|_{L^1(\Omega)}
	+ \|\rho^0-\bar\rho^0\|_{L^1(\Omega)}\big)^{C_2(2)} \\
	&\phantom{xx}{}+ C_1(\kappa_2,\xi)\big(\|c^0-\bar c^0\|_{L^1(\Omega)}
	+ \|\rho^0-\bar\rho^0\|_{L^1(\Omega)}\big)^{C_2(\gamma)}.
\end{align*}
The last two terms are bounded from above by 
$$
  C\big(\|c^0-\bar c^0\|_{L^1(\Omega)} + \|\rho^0-\bar\rho^0\|_{L^1(\Omega)}\big)^{C_5},
$$
where $C_5$ equals $C_2(2)$ or $C_2(\gamma)$ depending on whether
$\|c^0-\bar c^0\|_{L^1(\Omega)}+\|\rho^0-\bar\rho^0\|_{L^1(\Omega)}$
is smaller or larger than one. We conclude that
\begin{align}\label{3.aux2}
  \int_\Omega(\rho-\bar\rho)(c-\bar c)dx 
	&\le \frac12\int_\Omega\bigg(\psi(\rho|\bar\rho) + \frac12|\na(c-\bar c)|^2\bigg)dx \\
	&\phantom{xx}{}+ C\big(\|c^0-\bar c^0\|_{L^1(\Omega)}
	+ \|\rho^0-\bar\rho^0\|_{L^1(\Omega)}\big)^{C_5} \nonumber \\
	&\le \frac12 H(\rho,v,c|\bar\rho,\bar v,\bar c)
	+ C\big(\|c^0-\bar c^0\|_{L^1(\Omega)}
	+ \|\rho^0-\bar\rho^0\|_{L^1(\Omega)}\big)^{C_5}. \nonumber
\end{align}

{\em Step 4: End of the proof.} 
By \eqref{3.aux2}, the relative energy is bounded from below by
\begin{align*}
  E(\rho,v,c|\bar\rho,\bar v,\bar c)
	&\ge H(\rho,v,c|\bar\rho,\bar v,\bar c) - \int_\Omega(\rho-\bar\rho)(c-\bar c)dx \\
	&\ge \frac12H(\rho,v,c|\bar\rho,\bar v,\bar c)
	- C\big(\|c^0-\bar c^0\|_{L^1(\Omega)}
	+ \|\rho^0-\bar\rho^0\|_{L^1(\Omega)}\big)^{C_5}.
\end{align*}
We insert this estimate into \eqref{3.EH}:
\begin{align*}%\label{3.HE}
  \frac12 H((&\rho,v,c)(t)|(\bar\rho,\bar v,\bar c)(t))
	+ \frac12\int_0^t\int_\Omega|\pa_s(c-\bar c)|^2 dxds 
	+ C\int_0^t\|v-\bar v\|_{H^1(\Omega)}^2 ds \\
	&\le E(\rho^0,v^0,c^0|\bar\rho^0,\bar v^0,\bar c^0)
	+ C\int_0^t H(\rho,v,c|\bar\rho,\bar v,\bar c)ds \nonumber \\
	&\phantom{xx}{}+ C\big(\|c^0-\bar c^0\|_{L^1(\Omega)}
	+ \|\rho^0-\bar\rho^0\|_{L^1(\Omega)}\big)^{C_5}. \nonumber
\end{align*}
An application of Gronwall's lemma gives
\begin{align*}
  H((\rho,v,c)(t)|(\bar\rho,\bar v,\bar c)(t))
	&\le C e^{Ct}\big\{E(\rho^0,v^0,c^0|\bar\rho^0,\bar v^0,\bar c^0) \\
	&\phantom{xx}{}+ \big(\|c^0-\bar c^0\|_{L^1(\Omega)}
	+ \|\rho^0-\bar\rho^0\|_{L^1(\Omega)}\big)^{C_5}\big\},
\end{align*}
and the choice $\rho^0=\bar\rho^0$, $v^0=\bar v^0$, $c^0=\bar c^0$ ends the proof.

%%%%%%%%%%%%%%%%%%%%%%%%%%%%%%%%%%%%%%%%%%%%%%%%%%%%%%%%%%%%%%%%%%%%%%%%%%%%%%%

\begin{appendix}
\section{Auxiliary results}\label{sec.aux}

\begin{lemma}\label{lem.aux}
Let $\Omega\subset\R^d$ be a bounded domain with $d\in\{2,3\}$ and let $m>2(d+1)/(d+2)$.
Furthermore, let $\kappa,\xi>0$. Then there exist constants $C_1(\kappa,\xi)>0$ and
$C_2(m)>0$ such that for all $\rho\in L^m(\Omega)$, $c\in H^1(\Omega)$,
$$
  \int_\Omega \rho cdx \le \kappa\|\rho\|_{L^m(\Omega)}^m + \xi\|\na c\|_{L^2(\Omega)}^2
	+ C_1(\kappa,\xi)\|c\|_{L^1(\Omega)}^{C_2(m)}.
$$ 
\end{lemma}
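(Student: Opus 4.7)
The plan is to combine H\"older's inequality, the Gagliardo--Nirenberg interpolation inequality on $\Omega$, and Young's inequality with three factors. The hypothesis $m>2(d+1)/(d+2)$ will enter as the sharp admissibility condition for the Young exponents; everything else is calculation.

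First, writing $m'=m/(m-1)$ for the H\"older conjugate of $m$, H\"older's inequality gives
\[
  \int_\Omega \rho c\, dx \le \|\rho\|_{L^m(\Omega)}\|c\|_{L^{m'}(\Omega)}.
\]
Next I would apply the Gagliardo--Nirenberg inequality on the bounded domain $\Omega$ in the form
\[
  \|c\|_{L^{m'}(\Omega)} \le C\big(\|\na c\|_{L^2(\Omega)}^\theta\|c\|_{L^1(\Omega)}^{1-\theta} + \|c\|_{L^1(\Omega)}\big),
\]
where the interpolation exponent $\theta=2d/(m(d+2))\in(0,1)$ is determined by the scaling identity $1/m'=\theta(1/2-1/d)+(1-\theta)$. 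The requirement $\theta<1$ reduces to $m>2d/(d+2)$, which is already implied by the hypothesis, so no extra condition is needed here.

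The heart of the argument is Young's inequality with three factors applied to the product $\|\rho\|_{L^m}\|\na c\|_{L^2}^\theta\|c\|_{L^1}^{1-\theta}$. Using the exponents $(m,\,2/\theta,\,q)$ with $1/q=1-1/m-\theta/2$, Young yields
\[
  \|\rho\|_{L^m}\|\na c\|_{L^2}^\theta\|c\|_{L^1}^{1-\theta}
  \le \kappa\|\rho\|_{L^m}^m + \xi\|\na c\|_{L^2}^2 + C(\kappa,\xi)\|c\|_{L^1}^{(1-\theta)q}.
\]
The positivity condition $1/q>0$ is equivalent to $\theta<2(m-1)/m$, which after substituting $\theta=2d/(m(d+2))$ simplifies precisely to $m>2(d+1)/(d+2)$. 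This is the only place where the full hypothesis on $m$ is used, and it explains why the threshold is exactly this one. The leftover cross term $\|\rho\|_{L^m}\|c\|_{L^1}$ coming from the second piece of the Gagliardo--Nirenberg bound is controlled by ordinary two-exponent Young with $(m,m')$, producing a term of the form $\kappa\|\rho\|_{L^m}^m+C\|c\|_{L^1}^{m'}$.

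The final step is to merge the two resulting powers of $\|c\|_{L^1}$ into a single exponent $C_2(m)$, for which one takes $C_2(m)$ to dominate both $(1-\theta)q$ and $m'$ and uses the elementary bound $\|c\|_{L^1}^a\le \|c\|_{L^1}^b+1$ for $a<b$, absorbing additive constants into $C_1(\kappa,\xi)$. I expect no genuine obstacle in this proof; the only conceptual point is the verification that the Young admissibility condition matches precisely the stated hypothesis on $m$, while the rest is routine interpolation and Young bookkeeping.
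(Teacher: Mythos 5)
Your argument is correct and is essentially the paper's own proof: H\"older, then Gagliardo--Nirenberg with the same exponent $\theta=2d/(m(d+2))$, then Young, with the admissibility condition $\theta<2(m-1)/m$ reducing exactly to $m>2(d+1)/(d+2)$ and the same final exponents $m/(m-1)$ and $2m(1-\theta)/(2(m-1)-m\theta)$. The only difference is cosmetic bookkeeping (one three-factor Young instead of two successive two-factor applications, and your merging of the two powers of $\|c\|_{L^1}$ versus the paper's choice $C_2(m)=\max\{\cdot,\cdot\}$), which changes nothing of substance.
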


\begin{proof}
The proof of the lemma is contained in \cite[Appendix B]{Sug07} for solutions to the 
degenerate Keller--Segel equations. For clarity, we present the proof for general functions
$\rho$ and $c$.
We conclude from the interpolation inequality for Lebesgue spaces and Young's inequality that
for any $\kappa>0$,
$$
  \|\rho c\|_{L^1(\Omega)} \le \|\rho\|_{L^m(\Omega)}\|c\|_{L^{m/(m-1)}(\Omega)}
	\le \kappa\|\rho\|_{L^m(\Omega)}^m + C(\kappa)\|c\|_{L^{m/(m-1)}(\Omega)}^{m/(m-1)}.
$$
We estimate the second term on the right-hand side by applying the Gagliardo--Nirenberg
inequality with $\theta=2d/(m(d+2))$:
$$
  \|c\|_{L^{m/(m-1)}(\Omega)} \le C\|\na c\|_{L^2(\Omega)}^\theta\|c\|_{L^1(\Omega)}^{1-\theta}
	+ C\|c\|_{L^1(\Omega)}.
$$
Then, by Minkowski's and Young's inequality, for any $\eps>0$,
\begin{align*}
   \|\rho c\|_{L^1(\Omega)} &\le \kappa\|\rho\|_{L^m(\Omega)}^m 
	+ C(\kappa,m)\big(\|\na c\|_{L^2(\Omega)}^{m\theta/(m-1)}
	\|c\|_{L^1(\Omega)}^{m(1-\theta)/(m-1)}	+ \|c\|_{L^1(\Omega)}^{m/(m-1)}\big) \\
	&\le \kappa\|\rho\|_{L^m(\Omega)}^m + C(\kappa,m)\eps\|\na c\|_{L^2(\Omega)}^2 \\
	&\phantom{xx}{}+ C(\kappa,m,\eps)\big(\|c\|_{L^1(\Omega)}^{2m(1-\theta)/(2(m-1)-m\theta)}	
	+ \|c\|_{L^1(\Omega)}^{m/(m-1)}\big),
\end{align*}
which is possible since $m\theta/(m-1)<2$ is equivalent to $m>2(d+1)/(d+2)$. 
The lemma follows after choosing $\eps=\xi/C(\kappa,m)$, $C_1(\kappa,\xi)=C(\kappa,m,\eps)$,
and $C_2(m)=\max\{m/(m-1),2m(1-\theta)/(2(m-1)-m\theta)\}$.
\end{proof}

The following result concerns the maximal regularity of the solution to
\begin{align}
  & \pa_t u - \Delta u + u = f\quad\mbox{in }\Omega,\ t>0, \label{a.eq1} \\
	& \na u\cdot\nu=0\mbox{ on }\pa\Omega,\ t>0, \quad u(\cdot,0)=u^0\mbox{ in }\Omega,
	\label{a.eq2}
\end{align}
where $\Omega\subset\R^d$ ($d\ge 1$) is a bounded domain with $C^3$ boundary. We recall that $W^{2,-2/p,q}_\nu(\Omega)$ is the completion of the space of functions $w\in C^\infty(\overline\Omega)$ satisfying $\na w\cdot\nu=0$ on $\pa\Omega$ in the norm of $W^{2-2/p,q}(\Omega)$.
The theorem is a special case of \cite[Theorem 10.22]{FeNo09} or \cite[Lemma 7.37]{NoSt04}.

\begin{theorem}[Maximal regularity]\label{thm.maxreg}
Let $1<p,q<\infty$, $f\in L^p(0,T;L^q(\Omega))$, and let $u^0\in W^{2-2/p,q}_\nu(\Omega)$. 
%If $p>3$, we need additionally $\na u^0\cdot\nu=0$ on $\pa\Omega$.
Then there exists a unique solution $u$ to \eqref{a.eq1}--\eqref{a.eq2} satisfying
$$
  u\in L^p(0,T;W^{2,q}(\Omega))\cap W^{1,p}(0,T;L^q(\Omega))\cap
	C^0([0,T];W^{2-2/p,q}(\Omega)),
$$
and there exists a constant $C>0$ such that
\begin{align*}
  \|u\|_{L^\infty(0,T;W^{2-2/p,q}(\Omega))} 
  &+ \|u\|_{L^p(0,T;W^{2,q}(\Omega))}
  + \|\pa_t u\|_{L^p(0,T;L^q(\Omega))} \\
  &\le C\big(\|f\|_{L^p(0,T;L^q(\Omega))} 
  + \|u^0\|_{W^{2-2/p,q}(\Omega)}\big).
\end{align*}
\end{theorem}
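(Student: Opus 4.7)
The plan is to recognize this as a standard abstract $L^p$-maximal regularity result for a sectorial operator on $L^q(\Omega)$ and reduce everything to known semigroup/interpolation theory. Concretely, I would define the unbounded operator $A\colon D(A)\subset L^q(\Omega)\to L^q(\Omega)$ by $Au=-\Delta u+u$ with Neumann domain
\[
  D(A)=\{u\in W^{2,q}(\Omega): \na u\cdot\nu=0 \text{ on }\pa\Omega\},
\]
and rewrite \eqref{a.eq1}--\eqref{a.eq2} as the abstract Cauchy problem $\pa_t u + Au = f$, $u(0)=u^0$. The goal then splits into three ingredients: (i) $A$ is sectorial and generates a bounded analytic semigroup on $L^q(\Omega)$; (ii) $A$ has the $L^p$-maximal regularity property; (iii) the trace space $(L^q(\Omega),D(A))_{1-1/p,p}$ coincides with $W^{2-2/p,q}_\nu(\Omega)$.

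For (i), I would use standard elliptic regularity for the Neumann problem on a $C^3$ domain (Agmon's method with a resolvent shift) to show $\|(A-\lambda)^{-1}\|_{L^q\to L^q}\le C/|\lambda|$ in a sector around the negative real axis, and combine it with $W^{2,q}$ elliptic regularity to identify the domain. For (ii), the key fact is that $L^q(\Omega)$ is a UMD space for $1<q<\infty$ and that $A$ admits a bounded $\mathcal{H}^\infty$-calculus (or, equivalently, bounded imaginary powers) on such domains; the Dore--Venni / Weis theorem then yields maximal $L^p$-regularity, giving existence, uniqueness, and the estimate
\[
  \|\pa_t u\|_{L^p(0,T;L^q(\Omega))} + \|Au\|_{L^p(0,T;L^q(\Omega))} \le C\|f\|_{L^p(0,T;L^q(\Omega))}
\]
whenever $u^0=0$. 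For (iii), I would use the classical identification of real interpolation spaces for Neumann realizations of elliptic operators to obtain $(L^q(\Omega),D(A))_{1-1/p,p}=W^{2-2/p,q}_\nu(\Omega)$.

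To incorporate the initial datum, I would split $u=u_1+u_2$ with $u_1(t)=e^{-tA}u^0$ and $u_2$ the zero-initial-data part already handled above. For $u_1$ the analytic semigroup estimate combined with the interpolation identification in (iii) yields
\[
  \|u_1\|_{L^p(0,T;W^{2,q})}+\|\pa_t u_1\|_{L^p(0,T;L^q)} \le C\|u^0\|_{W^{2-2/p,q}(\Omega)}.
\]
The continuous-in-time statement $u\in C^0([0,T];W^{2-2/p,q}(\Omega))$, together with the $L^\infty$-in-time bound, then follows from the standard trace embedding
\[
  L^p(0,T;D(A))\cap W^{1,p}(0,T;L^q(\Omega)) \hookrightarrow C^0([0,T];(L^q(\Omega),D(A))_{1-1/p,p}),
\]
and finally summing the bounds on $u_1$ and $u_2$ gives the full estimate.

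The main obstacle is really the verification of item (ii): checking the bounded $\mathcal{H}^\infty$-calculus (or bounded imaginary powers) for the Neumann Laplacian on a general bounded $C^3$ domain. This is technical but entirely classical, and is precisely what is collected in \cite[Theorem 10.22]{FeNo09} and \cite[Lemma 7.37]{NoSt04}; for our purposes, citing those references is the cleanest route, so in practice the ``proof'' is just to verify that our problem \eqref{a.eq1}--\eqref{a.eq2} (Neumann boundary, $C^3$ domain, parameters $1<p,q<\infty$) fits the hypotheses of those statements and to quote them.
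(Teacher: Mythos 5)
Your proposal is correct and ends up exactly where the paper does: the paper gives no independent proof but simply invokes \cite[Theorem 10.22]{FeNo09} (or \cite[Lemma 7.37]{NoSt04}), and your outline of sectoriality, maximal $L^p$-regularity on the UMD space $L^q(\Omega)$, and the identification of the trace space $(L^q(\Omega),D(A))_{1-1/p,p}=W^{2-2/p,q}_\nu(\Omega)$ is the standard machinery behind those cited results. Since you conclude by checking the hypotheses and quoting the same references, your route and the paper's coincide.
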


\end{appendix}

%%%%%%%%%%%%%%%%%%%%%%%%%%%%%%%%%%%%%%%%%%%%%%%%%%%%%%%%%%%%%%%%%%%%%%%%%%%%%%%

\end{document}